\newtheorem{lemma}{Lemma}[section]
\newtheorem{proposition}[lemma]{Proposition}
\newtheorem{corollary}[lemma]{Corollary}
\newtheorem{theorem}[lemma]{Theorem}
\newtheorem{definition}[lemma]{Definition}
\newtheorem{remark}{Remark}
\newenvironment{proof}{{\bf
		Proof.}}{$\blacksquare$ \vspace{2mm}}
\newcommand{\C}{\mathbb{C}}
\newcommand{\D}{\mathbb{D}}
\renewcommand{\H}{\operatorname{H}}
\newcommand{\N}{\mathbb{N}}
\newcommand{\Q}{\mathbb{Q}}
\newcommand{\R}{\mathbb{R}}
\newcommand{\Z}{\mathbb{Z}}
\newcommand{\cL}{\mathcal{L}}
\newcommand{\fd}{\frak{d}}
\newcommand{\fg}{\frak{g}}
\newcommand{\genkel}{\mathscr{F}}
\newcommand{\cotan}{T^*}
\newcommand{\flavor}{\mathbf{A}}
\newcommand{\flavorlie}{\fa}
\newcommand{\dualflavor}{\flavor^{\vee}}
\newcommand{\XX}{X}
\DeclareMathOperator{\Spec}{Spec}
\DeclareMathOperator{\Proj}{Proj}
\DeclareMathOperator{\cst}{cst}
\newcommand{\rk}{\operatorname{rk}}
\newcommand{\gvc}{\fg^{\vee}}
\newcommand{\gr}{\eta} %real G moment
\newcommand{\gc}{\lambda} %complex G moment
\newcommand{\loc}{\operatorname{loc}}
\newcommand{\loops}{\mathscr{L}}
\newcommand{\tloops}{\widetilde{\loops}}
\newcommand{\fixed}{p}
\newcommand{\Rep}{\operatorname{Rep}}
\newcommand{\edges}{E}
\newcommand{\gaugeG}{\mathbf{G}}
\newcommand{\diag}{\mathbf{D}}
\newcommand{\eq}{\operatorname{eq}}
\newcommand{\per}{\mathscr{P}}
\newcommand{\ind}{\operatorname{ind}}
\newcommand{\coordset}{E}
\DeclareMathAlphabet\mathbfcal{OMS}{cmsy}{b}{n}
\newcommand{\bases}{\mathbb{B}}
\newcommand{\kstab}{\operatorname{Stab}}
\newcommand{\leaf}{\operatorname{Attr}}
\newcommand{\pic}{\operatorname{Pic}}
\newcommand{\finsymp}{\flavor}
\newcommand{\fa}{\frak{a}}
\newcommand{\ellcoh}{\operatorname{Ell}}
\newcommand{\bigtor}{T \times \C^\times_\hbar}
\newcommand{\elliptic}{\mathcal{E}}
\DeclareMathOperator{\oppo}{opp}
\author{Michael McBreen$^{[1]}$, Artan Sheshmani$^{[2,3,4]}$, Shing-Tung Yau$^{[5]}$}
\begin{document}
	
	%\noindent{\Large\bf }\\
	\title{Elliptic stable envelopes and hypertoric loop spaces}
	\maketitle
	\smallskip
	
	\begin{center}
		
	\end{center}
	\begin{abstract}
		This paper describes a relation between the elliptic stable envelopes of a hypertoric variety $\XX$ and a distinguished $K$-theory class on the product of the loop hypertoric space $\tloops \XX$ and its symplectic dual $\per X^!$. This class intertwines the K-theoretic stable envelopes in a certain limit. Our results are suggestive of a possible categorification of elliptic stable envelopes.

		\smallskip
		
\noindent{\bf MSC codes:} 14N35, 14M25, 14J33, 53D30, 53D55

\noindent{\bf Keywords:} Symplectic resolution, Symplectic duality, Hypertoric variety, K-theoretic stable envelopes, Elliptic cohomology, Elliptic stable envelope, Duality interface.	
\end{abstract}
	\tableofcontents

	\section{Introduction} 

	Let $X$ be a smooth quasi-projective complex variety with an action of a torus $\mathbf{A}$. A polarisation is a class $T^{1/2} \in K_{\mathbf{A}}(X)$ satisfying
	\[ TX = T^{1/2}_X + (T^{1/2}_X)^{\vee}. \] 	
	Stable envelopes are certain correspondences between $X^{\mathbf{A}}$ and $X$ associated with a polarization and a choice of generic cocharacter of $\mathbf{A}$. They can be defined on the level of cohomology, K-theory or elliptic cohomology. They play a crucial role in modern geometric representation theory.
	
	In each case, one works equivariantly with respect to a larger group $\mathbf{T} \supset \mathbf{A}$, which typically does not admit a polarization. In each case, the existence and uniqueness of stable envelopes is ensured by certain conditions on the action of $\mathbf{A}$ on $X$.

Stable envelopes were first studied in the setting where $X$ is a conical symplectic resolution with an action of a Hamiltonian torus $\mathbf{A}$, contained in a larger (non-Hamiltonian) torus $\mathbf{T} = \mathbf{A} \times \C^{\times}_\hbar$. In particular, elliptic stable envelopes were first defined in \cite{aganagic2016elliptic} as classes in the elliptic cohomology of $X$ over the Tate curve, where $X$ is a Nakajima quiver variety or a hypertoric variety. The latter will be the focus of this paper; they play the same role in the theory of symplectic resolutions as toric varieties do in the theory of algebraic varieties.

In this setting, Aganagic and Okounkov \cite{okounkovstringmath2019talk, okounkovenumerativedualitytalk} have proposed an interpretation of elliptic stable envelopes in terms of a `duality interface' relating $X$ to the symplectically dual space $X^!$. This is an elliptic cohomology class $\frak{m}$ on $X \times X^!$ which gives rise to the elliptic stable envelopes on $X, X^!$ after restriction to torus fixed points on either side. They gave an explicit formula for this class when $X$ is hypertoric. Smirnov and Zhou \cite{Smzhou20} have developed the hypertoric duality interface in detail, and Rim\'anyi, Smirnov, Varchenko and Zhou \cite{rimanyi20193d} have described certain non-abelian examples.  
		
	In the spirit of the classical uniformization of theta functions over the Tate curve, we view the elliptic class $\frak{m}$ as an element of \begin{equation} \label{eq:ktheoryofXX} K_{\flavor \times \gaugeG^{\vee} \times \C^\times_\hbar}(X \times X^!)[[q]], \end{equation} i.e. as a $q$-series in the equivariant K-theory of $X \times X^!$. 
		
		Elliptic cohomology near the Tate curve is intimately related related to the loop space of $X$ - see for example \cite{ando2000power}, \cite{kitchloo2019quantization}. As discussed in \cite{okounkovstringmath2019talk}, the same should be true of the elliptic stable envelopes and the duality interface. The latter is expected to categorify to an equivalence between certain quasi-coherent sheaves on the loop spaces of $X$ and $X^!$ with `half-dimensional' support.
		
		Our aim in this paper is to reinterpret the duality interface on $X \times X^!$ as a $K$-theory class $\xi(\loops^{+})$ on $\tloops X \times \per X^!$, where $\tloops X$ is a hypertoric model of the loop space introduced in \cite{MasheYau20}, and $\per X^!$ is its symplectic dual. We explain how this class is an instance of a general construction in hypertoric symplectic duality. Our construction has the advantage of being both elementary and explicit. On the other hand, it is not clear whether it generalises beyond the hypertoric setting. 
		
		In more detail, recall from \cite{MasheYau20} that $\tloops X$ is defined as a limit of finite dimensional hypertoric varieties $\tloops_N X$ along closed embeddings, depending on a stability parameter $\eta$. 
		
		As explained in that paper, $\tloops X$ should be viewed as a first approximation to the universal cover of the space of loops into $X$, which captures some of its key geometric features. In particular, it carries an action of $H_2(X, \Z)$ corresponding to the action of the fundamental group of the loop space by deck transformations, and an action of $\C^\times_q = \C^\times$ corresponding to `loop rotation', whose fixed locus is identified with $X \times H_2(X, \Z)$. We define a certain completion $\tloops K(X)$ of the equivariant K-theory of $\tloops X$. The variable $q$ appears naturally as a character of the group $\C_q^\times$.		
		
		On the other hand, the space $\per X^!$ is a kind of multiplicative hypertoric space, originally studied in an unpublished note of Hausel and Proudfoot. It may be defined as a limit along open embeddings of smooth finite dimensional hypertorics $\per_N X^!$, depending on a stability parameter $\widetilde{\zeta}$. 
		
		In order to make contact with elliptic stable envelopes, we must consider a special value $\hat{\zeta}$ of the stability parameter for which the hypertorics $\per_N X^!_{\hat{\zeta}}$ are rather singular : they are quotients of $X^!_\zeta$ by certain torsion subgroups of its Hamiltonian torus. As a consequence, the limit space no longer exists in the category of schemes. We can nevertheless define a ring $\per K^{\circ}(X^!)$ which plays the role of ``equivariant K-theory" of the limit space.

		There is a geometrically defined map 
		\begin{equation} \label{eq:introphimap} \tloops K(X) \hat{\otimes} \per K(X^!) \to K_{\flavor \times \gaugeG^{\vee} \times \C^\times_\hbar}(X \times X^!)((q)) \end{equation} 
		 where the left-hand side denotes a certain completion of the tensor product. We find that the duality interface naturally lifts to a distinguished class $\xi(\loops^{+})$ in the left-hand ring.
		 
	To better understand  $\xi(\loops^{+})$, we observe that it is an instance of a much more general hypertoric construct. We define by a simple prescription a class $\xi \in K_{\flavor \times \gaugeG^{\vee} \times \C^\times_\hbar}(Y \times Y^!)$ associated to any pair of symplectically dual hypertorics $Y, Y^!$, together with a choice of polarisation. When $Y = \tloops X, Y^! = \per X^!$ and the polarisation is by holomorphic loops, we recover $\xi = \xi(\loops^{+})$. This is Theorem \ref{thm:maintheorem}. 
	
	We show that the class $\xi$ satisfies a number of properties analogous to the K-theoretic stable envelope. Proposition \ref{prop:intertwinertheorem} shows that when viewed as a correspondence from $Y$ to $Y^!$, it intertwines the K-theoretic stable envelopes of both spaces, once we let our equivariant parameters tend to infinity.
		
	The space $K_{\flavor \times \gaugeG^{\vee} \times \C^\times_\hbar}(Y \times Y^!)$ admits a tautological categorification, namely the derived category of equivariant coherent sheaves. There is a natural lift of $\xi$ to an object of this category, satisfying certain compatibilities with the various group actions. Our result is thus suggestive of a possible categorification of the elliptic stable envelope as a Fourier-Mukai kernel between the dual loop spaces of $X$ and $X^!$, as predicted in \cite{okounkovstringmath2019talk}.

	\subsection{Acknowledgements}
The first named author thanks Mina Aganagic, Alexander Braverman, Andrei Okounkov, Sarah Scherotzke, Nicol\`o Sibilla and Andrei Smirnov for helpful discussions. We also thank the anonymous referee for comments and corrections. Research of A. S. was partially supported by the US grants: NSF DMS-1607871, NSF DMS-1306313, Simons 38558, as well as Laboratory of Mirror Symmetry NRU HSE, RF Government grant, ag. No 14.641.31.0001. Furthermore, research of M. M. was carried at Harvard CMSA through Harvard CMSA / Aarhus Universitet collaboration agreement, and was supported by the supplemental grant of A. S. at Institut for Matematik, Aarhus Universitet. S.-T. Y. was partially supported by the US grants: NSF DMS-0804454, NSF PHY-1306313, and Simons 38558.
	
	\section{K-theoretic stable envelopes}
	
		The next few sections collect some generalities which we will have use for. We start by recalling the definition of a symplectic resolution, before narrowing our focus to hypertoric varieties in the main body of the paper.	
	
		\begin{definition} \label{def:sympres}
		Let  $\XX$ be a smooth complex variety equipped with an algebraic symplectic form $\Omega$ and an action of $\C^\times_\hbar := \C^\times$ scaling $\Omega$ by a nontrivial character $\hbar$. We call $\XX$ a conical symplectic resolution if 
		\begin{itemize}
			\item The natural map $\XX \to \Spec H^0(\XX, \mathscr{O}_\XX)$ is a projective resolution of singularities.
			\item The induced $\C^{\times}$-action on $\Spec H^0(\XX, \mathscr{O}_\XX)$ contracts it to a point.
		%	\item The minimal symplectic leaf of $\Spec H^0(\XX, \mathscr{O}_\XX)$ is a point. 
		\end{itemize}
	\end{definition}
	
	We fix a maximal torus $\flavor$ of the group of (complex) hamiltonian automorphisms of $\XX$, which we assume has isolated fixed point locus $\XX^\flavor$.

	\subsection{Equivariant K-theory}
	
	Let $K_{\bigtor}(\XX)$ denote the equivariant K-theory ring of $\XX$. Let $K_{\bigtor}(pt)_{\loc}$ be the field of fractions of $K_{\bigtor}(pt)$.
	\begin{definition}
		Let $K_{\bigtor}(\XX)_{\loc} := K_{\bigtor}(\XX) \otimes_{K_{\bigtor}(pt)} K_{\bigtor}(pt)_{\loc}$.
	\end{definition}
 The equivariant Euler characteristic defines a map $\chi_{\eq} : K_{\bigtor}(\XX)\to K_{\bigtor}(\text{pt})_{\loc}$. We define a symmetric pairing on equivariant K-theory as follows.
		
	\begin{definition}
		Given $\gamma, \gamma' \in K_{\bigtor}(\XX)$, let $$\langle \gamma, \gamma' \rangle := \chi_{\eq}(\gamma\otimes \gamma').$$
	\end{definition}

In order to work with stable envelopes, we need a notion of degree as follows. Let $A$ be a torus. Any element $\genkel \in K_{A}(pt)$ can be expanded as a sum of characters \[ \genkel = \sum_{\mu \in X^{\bullet}(A)} a_\mu t^\mu. \] 
\begin{definition}
	We write $\deg_{A} \genkel$ for the convex hull in $\frak{a}^{\vee}_\R$ of the $A$-weights $\mu$ appearing with nonzero coefficient. 
\end{definition}
Degrees are partially ordered by containement of polytopes. 

We will also occasionally take limits of equivariant parameters, in the following sense. Any cocharacter $\sigma : \C^\times \to A$ determines a fan in $\mathfrak{a}_\R$ consisting of a single ray spanned by the derivative of $\sigma$. There is an open toric embedding $A \to A_\sigma$, where $A_\sigma$ is the toric variety associated to this fan. In coordinates, $\C[A_\sigma] \subset \C[A]$ is generated by $t^{\mu}$ satisfying $\langle \mu, \sigma \rangle \geq 0$. We have an isomorphism of schemes $A_\sigma \setminus A \cong A/\C^\times$.
\begin{definition} Let $\mathscr{Q}$ be an element of the fraction field of $H^0(A, \mathscr{O})$ with non-negative valuation along $A_\sigma \setminus A$. Write 
\[ \lim_{\sigma \to \infty} \mathscr{Q} \] for the corresponding element of the fraction field of $H^0(A_\sigma \setminus A, \mathscr{O}) \cong H^0(A/\C^\times, \mathscr{O})$. 
\end{definition}

	The following is elementary:
	\begin{lemma} \label{lem:strictboundvanishinglimit}
		If $\deg_A(\genkel)$ is strictly contained in $\deg_A(\mathscr{G})$, then the limit of $\genkel / \mathscr{G}$ along any cocharacter of $A$ vanishes.
	\end{lemma}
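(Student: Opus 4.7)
The plan is to unpack the abstract limit in the definition into a concrete leading-order statement in the cocharacter direction, and then to reduce the claim to an elementary fact about Newton polytopes.

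First I would fix a nontrivial cocharacter $\sigma$ and choose a splitting $A \cong \sigma(\C^\times) \times A'$, with coordinate $\tau$ on the first factor. Every character $t^\mu$ of $A$ then factors as $\tau^{\langle \sigma, \mu\rangle}$ times a character of $A'$, so I can regroup
\[ \genkel = \sum_k \tau^k \genkel_k, \qquad \mathscr{G} = \sum_k \tau^k \mathscr{G}_k, \]
with $\genkel_k, \mathscr{G}_k \in H^0(A', \mathscr{O})$. In this splitting $A_\sigma$ is the partial compactification adjoining a divisor $D \cong A'$ at $\tau = \infty$, along which the character $\tau^k$ has valuation $-k$.

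Next I would set $M = \max\{k : \mathscr{G}_k \neq 0\}$ and $M' = \max\{k : \genkel_k \neq 0\}$; these are precisely the maximal values of $\langle \sigma, \cdot \rangle$ on $\deg_A(\mathscr{G})$ and $\deg_A(\genkel)$, respectively. The ratio $\genkel/\mathscr{G}$ therefore has valuation $M - M'$ along $D$, which is non-negative by the hypothesis that the limit is defined. When $M - M' > 0$ the restriction to $D$ is zero; when $M = M'$, the limit equals $\genkel_{M'}/\mathscr{G}_M$ and is generically nonzero. So the task reduces to showing $M' < M$ for every nontrivial $\sigma$.

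This last step follows from a standard convex-geometry fact: under strict containment of polytopes (interpreted in the relative interior sense appropriate here), any nontrivial linear functional attains a strictly smaller maximum on the smaller polytope. The main point requiring care is this convex-geometric step and the correct reading of ``strictly contained'' in the hypothesis; the rest is a direct unpacking of the definitions of degree and of limit along a cocharacter.
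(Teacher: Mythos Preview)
The paper does not prove this lemma; it is introduced with ``The following is elementary:'' and no argument is given. Your proposal is exactly the kind of direct unpacking the authors have in mind, and the reduction to comparing $M'$ and $M$ is correct.

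One refinement is worth making explicit. You write that the needed convex-geometric fact holds ``in the relative interior sense,'' but relative interior is not quite enough: if $\deg_A(\mathscr{G})$ is not full-dimensional and $\sigma$ is orthogonal to its affine hull, then $\langle\sigma,\cdot\rangle$ is constant on both polytopes, $M=M'$, and your own computation gives a nonzero limit $\genkel_{M'}/\mathscr{G}_M$. The reading that makes the lemma true for \emph{every} cocharacter is that $\deg_A(\genkel)$ lies in the topological interior of $\deg_A(\mathscr{G})$ in $\frak{a}^\vee_\R$; this forces $\deg_A(\mathscr{G})$ to be full-dimensional and then indeed any nonzero linear functional attains a strictly larger maximum on it. You already flag this as the point requiring care, so this is a sharpening rather than a correction, and in the paper's applications the denominator polytopes (coming from $\bigwedge^\bullet T_xX$) are full-dimensional, so the issue does not arise in practice.
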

	This motivates the following definition.
\begin{definition}
We say $\genkel / \mathscr{G}$ is bounded if $\deg_A(\genkel)$ is contained in  $\deg_A(\mathscr{G})$, and strictly bounded if the containment is strict.
\end{definition}

\subsection{K-theoretic stable envelopes}
	We recall the definition of $K$-theoretic stable envelopes in a somewhat restricted generality, which will be sufficient for our purposes and simplifies the exposition. More details can be found in \cite[Section 9]{okounkov2015lectures}.
	
		We fix the following data:  
	\begin{enumerate}
		\item A cocharacter $\sigma$ of $\finsymp$, which is {\em generic} in the sense that $\XX^{\C^\times} = \XX^\flavor$.
		\item A polarization, i.e. a splitting
		\[ T\XX = T^{1/2} + \hbar^{-1} (T^{1/2})^{\vee} \] in $K_{\flavor \times \C^\times_\hbar}(\XX)$. 
		\item A {\em slope} $\cL \in \pic_{\finsymp}(\XX) \otimes_\Z \Q$, {\em generic} in the sense that the degree of $\cL$ on any rational curve joining two fixed points is non-integral.
	\end{enumerate}

 For $p \in \XX^\finsymp$, we can define the attracting cell
	\[ \leaf_\sigma(p) := \{ x \in \XX | \lim_{z \to 0} \sigma(z) \cdot x = p \}. \]
We define a partial order on $\XX^\finsymp$ by taking the closure of the relation $\{ q \leq p \text{ if } q \in \overline{\leaf_\sigma(p)} \}$. We define the `full attracting set' of $p$ to be 
	\[ \leaf_\sigma^{f}(p) := \cup_{q \leq p} \leaf_\sigma(q). \]
	It is a closed singular langrangian in $\XX$.

The K-theoretic stable envelope $\kstab_{\sigma, T^{1/2}, \cL}(p) \in K_{\finsymp \times \C^\times_\hbar}(\XX)$ is a class satisfying the following conditions :
	\begin{enumerate}
		\item It is supported on $\leaf^f_\sigma(p).$ 
		\item Its restriction to $p$ equals the restriction of $\mathscr{O}_{\leaf_p} \otimes \mathscr{L}$ where 
		$$\mathscr{L} =  (-1)^{\rk T^{1/2}_{>0}} \left( \frac{ \det T_{< 0} } { \det T^{1/2} } \right)^{1/2}$$
		Here $T_{<0}$ is the repelling part of the tangent space at $p$, i.e. the complement to the tangent space of $\leaf_\sigma(p)$.
		\item Let $q \in \XX^\flavor$. Then we have
		\[ \deg_{\flavor} \kstab(p)|_q \otimes \cL_p \subset \deg_{\flavor} \kstab(q)|_q \otimes \cL_q. \]
	\end{enumerate}
	Stable envelopes exist, and are uniquely specified by the above conditions, for a wide class of symplectic resolutions including all hypertoric varieties. See \cite[Section 9]{okounkov2015lectures} for an introduction and \cite{okounkov2020inductive} for a much more general construction.  
\begin{definition}
 Let $A$ be a torus. Let $\overset{\bullet} \bigwedge : (K_{A}(\text{pt}), +) \to  (K_{A}(\text{pt})_{\text{loc}}, \otimes)$ be the unique map extending  $V\to \sum_{i}(-1)^{i}\bigwedge^{i}V$. It may be written in coordinates as $$\sum_{\mu\in X^{\bullet}(A)}c_{\mu}t^{\mu}\to \prod_{\mu\neq 0}(1-t^{\mu})^{c_{\mu}}.$$
 \end{definition}
Condition 2 tells us that, after specializing $K_{\flavor \times \C^\times_\hbar}(pt) \to K_{\flavor}(pt)$, we have an equality 
 \[ \kstab(p)|_p = \bigwedge^{\bullet} (T^{1/2}|_p)^{\vee}. \]

The following proposition tell us that stable envelopes for `opposite' choices of data form dual bases of K-theory.
\begin{proposition}	\label{prop:stablesaredual}
	Fix data $\sigma, T^{1/2}, \cL$ as above. Let $T^{1/2}_{\oppo} := T_X - T^{1/2}$. 
	\begin{enumerate}
		\item The classes $\operatorname{Stab}_{\sigma, T^{1/2}, \cL}(p)$ for $p\in X^{\flavor}$ form a basis of $K_{\bigtor}(X)_{\text{loc}}$ over $K_{\bigtor}(\text{pt})_{\text{loc}}$.
		\item $\left \langle \operatorname{Stab}_{\sigma, T^{1/2}, \cL}(p), \operatorname{Stab}_{-\sigma, T^{1/2}_{\oppo}, \cL^{-1}}(q)\right \rangle=\delta_{pq}$.
		\end{enumerate}
	\end{proposition}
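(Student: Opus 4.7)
Both parts reduce to equivariant $K$-theoretic localization. My strategy for (2) is to push the pairing through the fixed-point localization formula, kill most summands using the support condition (1), compute the diagonal directly via the normalization (2), and leverage the slope condition (3) together with conicality of $\XX$ to kill the off-diagonal contributions.

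For (1), Thomason localization identifies
\[ K_{\bigtor}(\XX)_{\loc} \;\cong\; \bigoplus_{p \in \XX^\flavor} K_{\bigtor}(\mathrm{pt})_{\loc} \]
via restriction to fixed points. Enumerate $\XX^\flavor$ compatibly with the partial order induced by $\sigma$. By Condition (1), $\operatorname{Stab}_\sigma(p)|_q$ vanishes whenever $q \not\leq p$, so the restriction matrix is triangular. Condition (2) identifies its diagonal entry at $p$ as $\bigwedge^\bullet((T_{<0}|_p)^\vee) \cdot \mathscr{L}|_p$; genericity of $\sigma$ ensures every weight of $T_{<0}|_p$ is a nontrivial character, so this entry is a product of nonzero factors $(1-t^\mu)$ and is invertible in $K_{\bigtor}(\mathrm{pt})_{\loc}$. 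The transition matrix is therefore invertible and the stable envelopes form a basis.

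For (2), localization rewrites the pairing as
\[ \bigl\langle \operatorname{Stab}_{\sigma,T^{1/2},\cL}(p),\, \operatorname{Stab}_{-\sigma,T^{1/2}_{\oppo},\cL^{-1}}(q) \bigr\rangle = \sum_{r \in \XX^\flavor} \frac{\operatorname{Stab}_\sigma(p)|_r \cdot \operatorname{Stab}_{-\sigma}(q)|_r}{\bigwedge^\bullet (T_r\XX)^\vee}. \]
Condition (1) applied to both envelopes kills every summand with $r \notin \{r : q \leq r \leq p\}$, so the pairing vanishes whenever $q \not\leq p$. When $p = q$, only $r = p$ survives; combining (2) with $T\XX = T^{1/2} + \hbar^{-1}(T^{1/2})^\vee$, the alternating-power factors $\bigwedge^\bullet((T_{<0}|_p)^\vee) \cdot \bigwedge^\bullet((T_{>0}|_p)^\vee)$ collapse to $\bigwedge^\bullet(T_p\XX)^\vee$ and cancel the denominator, while a direct computation gives $\mathscr{L}_\sigma|_p \cdot \mathscr{L}_{-\sigma}|_p = 1$ (the sign factors match because $T^{1/2}_{\oppo} = \hbar^{-1}(T^{1/2})^\vee$ reverses the $\sigma$-weight decomposition, and the determinant ratio trivializes via $\det T\XX = \hbar^{-\dim \XX/2}$).

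When $q < p$ strictly, conicality of $\XX$ forces $\leaf_\sigma^f(p) \cap \leaf_{-\sigma}^f(q)$ to be compact, so the pairing actually lies in the unlocalized ring $K_{\bigtor}(\mathrm{pt})$, i.e.\ is a Laurent polynomial in $\flavor$ rather than a mere rational function. Applying (3) to each envelope at every contributing fixed point $r$ yields Minkowski bounds on $\flavor$-weight polytopes that combine, in conjunction with the diagonal computation, to place $\deg_\flavor\bigl[\operatorname{Stab}_\sigma(p)|_r \cdot \operatorname{Stab}_{-\sigma}(q)|_r \otimes \cL_p \cL_q^{-1}\bigr]$ strictly inside $\deg_\flavor \bigwedge^\bullet(T_r\XX)^\vee$, with strictness coming from the non-integrality of $\cL$ on curves between distinct fixed points. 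By Lemma \ref{lem:strictboundvanishinglimit} each summand vanishes under any limit along a generic cocharacter of $\flavor$; combined with polynomiality this forces the pairing itself to vanish. The principal obstacle is precisely this final step --- coordinating the pole cancellations across distinct fixed-point contributions and bootstrapping the term-by-term limit vanishing to vanishing of the polynomial sum --- which is a standard but delicate rigidity argument in the spirit of Maulik--Okounkov.
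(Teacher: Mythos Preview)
The paper does not prove this proposition; it is stated without proof as a standard fact, with the reader directed to \cite[Section 9]{okounkov2015lectures} for the background on K-theoretic stable envelopes. Your argument is exactly the standard proof found there and is correct in outline.

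One place deserves an extra sentence. In the final rigidity step for $q < p$, you invoke Lemma \ref{lem:strictboundvanishinglimit} to conclude that each localization summand has limit zero along every cocharacter of $\flavor$, and then say ``combined with polynomiality this forces the pairing itself to vanish.'' Spell this out: the finite sum of the summands is the Laurent polynomial pairing $P \in K_{\flavor \times \C^\times_\hbar}(\mathrm{pt})$, so $P$ itself has limit zero along every cocharacter of $\flavor$; a Laurent polynomial admitting a finite limit along every cocharacter has Newton polytope (in the $\flavor$-directions) reduced to the origin, hence is constant in the $\flavor$-variables and equal to that common limit, namely zero. Without this sentence, a reader might worry that termwise vanishing of limits does not by itself control the sum.
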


		\section{Hypertoric varieties} \label{sec:hypertoricvarieties}
	In this section we define our main geometric actors: the hypertoric varieties introduced in \cite{bielawski2000geometry}. For a survey of these spaces, see \cite{proudfoot2006survey}. 
	
	Fix the following data:
	\begin{enumerate}
		\item A finite set $\coordset$.
		\item A short exact sequence of complex tori 
		\begin{equation} \label{eq:basicsequence} 1 \to \gaugeG \to \diag \to \flavor \to 1, \end{equation} with an isomorphism $\diag = (\C^\times)^\coordset$.
		\item A character $\eta$ of $\gaugeG$.
	\end{enumerate}
	To these choices we will associate a hypertoric variety. Let $\fg, \fd, \flavorlie$ be the complex lie algebras of $\gaugeG,\diag,\flavor$. We require that $\fd_{\Z} \to \flavorlie_{\Z}$ be totally unimodular, i.e. the determinant of any square submatrix (for a given choice of integer basis) is one of $-1,0,1$. This will ensure that when smooth, our hypertoric variety is a genuine variety and not an orbifold. We also assume that no cocharacter of $\gaugeG$ fixes all but one of the coordinates of $\C^\coordset$.
	
	Let $V := \Spec \C[x_e | e \in \coordset]$. The torus $\diag$ acts by hamiltonian transformations on $\cotan V = \Spec \C[x_e, y_e | e \in \coordset]$, equipped with the standard symplectic form $\Omega := \sum_{e \in \coordset} dx_e \wedge dy_e$.  A moment map $\mu_\diag: \cotan V\to \fd^{\vee}$ is given by
	\[
	\mu_\diag(x,y) = (x_e y_e).
	\]
	We have the exact sequence
	\begin{equation} \label{basicsequence}
	0\to\fg\overset{\partial}{\to}\fd \overset{\beta}{\to} \flavorlie\to 0
	\end{equation}
	and its dual
	\begin{equation} \label{dualsequence}
	0\to \flavorlie^{\vee}  \overset{\beta^{\vee}}\to \fd^{\vee}\overset{\partial^{\vee}}{\to} \fg^{\vee}\to 0.
	\end{equation}
	
	The pullback $\mu_\gaugeG=\partial^{\vee}\circ\mu_\diag$ defines a moment map for the $\gaugeG$ action on $\cotan V$. Fix a character $(\gr, \gc) \in \fg^{\vee}_\Z \oplus \gvc$. 		Given a $\gaugeG$-variety $U$, write $U \sslash_{\gr} \gaugeG$ for the GIT quotient $\Proj \bigoplus_{m \in \N} \{ f \in \mathscr{O}(U) : g^* f = \gr(g)^{-m} f. \}$. Let

	\begin{definition}
		\begin{align}
		\label{def:hypertoricreduction} X_{\gr, \gc} & := \mu_{\gaugeG}^{-1}(\gc) \sslash_{\gr} \gaugeG. %\\
		\end{align}
	\end{definition}
	We will often assume that $\eta$ is suitably generic, in which case $\XX_{\gr, \gc}$ is smooth; this holds away from a finite set of hyperplanes. We write $X_{\gr} := X_{\gr, 0}$, which we sometimes abbreviate further to $\XX$. 
	
	%The isomorphism $\mathbf{D} \cong (\C^*)^E$ determines a basis of $\mathfrak{d}$. We can assume that the map $\beta$ does not annihilate any basis element. If a basis vector lies in the image of $\partial$, then we can find a splitting of the exact sequence into 
	%\[ 1 \to \C^* \cong \C^* \to 1 \to 1 \] and 
	%\[ 1 \to \mathbf{G}' \to \mathbf{D}' \to \mathbf{A} \to 1 \] 
	%The variety $X$ is naturally isomorphic to the variety associated to the second exact sequence.  
	
	The Kirwan map defines morphisms
	$\H^2_\flavor(\XX, \Z) \to \fd^{\vee}_\Z$, $\H^2(\XX, \Z) \to \frak{g}^{\vee}_\Z$ and $\H_2(\XX , \Z) \to \fg_\Z$. If the map $\beta$ does not annihilate the natural basis of $\mathfrak{d} = \Z^n$ and $\eta$ is suitably generic, these maps are isomorphisms. 
	
	% and $\XX_\gr$ carries a real symplectic form of class $\gr$, for which the action of the compact subtorus of $\flavor$ is Hamiltonian.

	The variety $\XX$ inherits an algebraic symplectic structure from its construction via symplectic reduction. The induced $\flavor$ action on $\XX$ is Hamiltonian. There is a further action of $\C^{\times}_{\hbar}$ dilating the fibers of $\cotan V$, which scales the symplectic form by $\hbar$. This preserves $\mu_\gaugeG^{-1}(0)$, and descends to an action of $\C^{\times}_{\hbar}$ on $\XX$ commuting with the action of $\flavor$.

	\subsection{Bases and torus fixed points} 
	The torus fixed points $\XX^{\flavor}_\eta$ are indexed by {\em bases}. These are the subsets $b \subset \edges$ such that the restriction of $\Z^E \to \flavorlie_\Z$ to $\Z^b$ is an isomorphism. By construction, the set of bases $\mathbb{B}$ does not depend on $\eta$.  
	
	\begin{lemma}
		There is a bijection $\bases \to \XX_\eta^\flavor$ taking $b$ to $$\fixed_b := \left( \cotan  \C^{\edges \setminus b} \cap \mu_\gaugeG^{-1}(0) \right) \sslash_\eta \gaugeG.$$
	\end{lemma}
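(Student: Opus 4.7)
The plan is to identify $\XX_\eta^\flavor$ directly in terms of $\diag$-orbits and then match them to bases. The starting observation is that, since $\flavor = \diag/\gaugeG$, a semistable $\gaugeG$-orbit $\mathcal{O} \subset \mu_\gaugeG^{-1}(0)^{ss}$ descends to a $\flavor$-fixed point on $\XX_\eta$ if and only if $\diag \cdot \mathcal{O} = \mathcal{O}$, equivalently the image of $\mathrm{Stab}_\diag(\tilde p)$ in $\flavor$ is all of $\flavor$ for every $\tilde p \in \mathcal{O}$. Since $\diag$ acts on $\cotan V$ with weights $\pm e^*$ on the coordinate pairs $(x_e, y_e)$, this stabilizer is the coordinate subtorus $(\C^\times)^{T(\tilde p)} \subset \diag$, where $T(\tilde p) = \{e \in \edges : x_e = y_e = 0\}$; the $\flavor$-fixed condition becomes that $\Z^{T(\tilde p)} \subset \fd_\Z$ surjects onto $\flavorlie_\Z$.

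First I would verify that each $\fixed_b$ is a well-defined single point. Since $b$ is a basis, dualizing the defining short exact sequence shows that the restriction of $\partial^\vee$ to $\Z^{\edges \setminus b}$ is an isomorphism onto $\fg^\vee_\Z$. Restricting $\mu_\gaugeG$ to $\cotan \C^{\edges \setminus b}$ and setting it to zero therefore forces $x_e y_e = 0$ for every $e \in \edges \setminus b$, so the zero locus decomposes into coordinate subspaces indexed by partitions $\edges \setminus b = S_+ \sqcup S_-$. Each such subspace is isomorphic to $\C^{\edges \setminus b}$ and carries a faithful $\gaugeG$-action via $\gaugeG \hookrightarrow \diag \twoheadrightarrow (\C^\times)^{\edges \setminus b}$, with signs on coordinates dictated by the partition. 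For generic $\eta$, exactly one partition renders all characters positive and therefore has nonempty $\eta$-semistable locus, whose GIT quotient is a single point; moreover the corresponding $\gaugeG$-orbit is simultaneously a $\diag$-orbit, since its stabilizer $(\C^\times)^b$ surjects onto $\flavor$ by the basis property, so the resulting point is $\flavor$-fixed.

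Conversely, given $p \in \XX_\eta^\flavor$ with lift $\tilde p$, the criterion from the first paragraph says that $\Z^{T(\tilde p)}$ surjects onto $\flavorlie_\Z$. Total unimodularity of $\fd_\Z \to \flavorlie_\Z$ implies that any such surjection admits a splitting supported on a size $\dim \flavor$ subset $b \subseteq T(\tilde p)$, and by construction such a $b$ is a basis with $\tilde p \in \cotan \C^{\edges \setminus b} \cap \mu_\gaugeG^{-1}(0)$. Injectivity of $b \mapsto \fixed_b$ follows because the stability analysis above forces $x_e$ or $y_e$ to be nonzero for every $e \in \edges \setminus b$ at any representative of $\fixed_b$, so the set $T$ reduces exactly to $b$ and is recovered from the fixed point.

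The main technical point is the stability bookkeeping in the middle step: showing that precisely one of the $2^{|\edges \setminus b|}$ coordinate partitions supports $\eta$-semistable points and that the resulting GIT quotient is a reduced point. This uses both genericity of $\eta$ and the total unimodularity hypothesis; the remaining content of the lemma reduces to linear algebra in the exact sequence $0 \to \fg \to \fd \to \flavorlie \to 0$ and its dual.
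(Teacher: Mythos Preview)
The paper does not supply a proof of this lemma; it is stated as a standard fact from hypertoric geometry (implicit in \cite{bielawski2000geometry} and \cite{proudfoot2006survey}), followed only by the remark that one may write $p = \bigcap_{e \in b} \{x_e = y_e = 0\}$. Your argument is therefore not being compared against anything in the paper, but it is a correct and complete proof.

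A couple of small comments. In your converse step, once you have found $b \subseteq T(\tilde p)$, you should state explicitly that $\tilde p$, being $\eta$-semistable in $\cotan\C^{\edges\setminus b}\cap\mu_\gaugeG^{-1}(0)$, descends to a point of $\fixed_b$; since the latter is a single point, this gives $p = \fixed_b$ and hence surjectivity. You essentially say this, but it is folded into the injectivity paragraph. Also, your use of total unimodularity is exactly right: a surjection $\Z^{T(\tilde p)} \twoheadrightarrow \flavorlie_\Z$ yields a full-rank $\dim\flavor$-minor, and total unimodularity upgrades this to an isomorphism over $\Z$, producing a basis $b$. Finally, the claim that for generic $\eta$ exactly one coordinate Lagrangian in $\cotan\C^{\edges\setminus b}$ contains semistable points is precisely the content of the paper's subsequent Lemma on the Lagrangian $L_\eta$; you have effectively reproved it inline.
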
 
We can schematically write $p = \bigcap_{e \in b} \{ x_e = y_e = 0 \}$.

	The isomorphism $\Z^b \to \flavorlie_\Z$ determines a basis of the right-hand lattice. Let $\{ \alpha^b_e \} \subset \flavorlie_\Z^{\vee}$ be the dual basis. We will sometimes write $\alpha^p_e$ if we wish to emphasise the fixed point rather than the base.
\begin{lemma} \label{lem:charofnormal}  Let $e \in b$. The normal bundle to $\{x_e = 0\}$ at $p$ has $\flavor$-character $\alpha^p_e$. The normal to $\{y_e = 0\}$ has $\flavor$-character $-\alpha^p_e$. \end{lemma}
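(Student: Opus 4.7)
The plan is to reduce the computation to the standard $\diag$-action on $\cotan V$ by choosing a suitable lift of $p$ and using the basis $b$ to pick a section of the short exact sequence $1 \to \gaugeG \to \diag \to \flavor \to 1$.

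First, I would pick a representative $\tilde p \in \mu_\gaugeG^{-1}(0) \cap \cotan \C^{\coordset \setminus b}$ of $p$, so that $x_e(\tilde p) = y_e(\tilde p) = 0$ for every $e \in b$. Let $\diag_b \subset \diag$ be the subtorus consisting of elements whose coordinate in $\C^\times$ indexed by $e \notin b$ is trivial. Since these coordinates vanish on $\tilde p$, the subtorus $\diag_b$ lies in the $\diag$-stabilizer of $\tilde p$; since $b$ is a basis, the composition $\diag_b \hookrightarrow \diag \twoheadrightarrow \flavor$ is an isomorphism, giving a canonical section of the quotient. By construction of $\{\alpha^b_e\}$ as the dual basis, the pullback of $\alpha^b_e \in X^\bullet(\flavor)$ along this section is precisely the coordinate character $s_e$ of $\diag_b$.

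Next, I would identify the tangent space $T_p \XX$ as a $\flavor$-representation. Since $\eta$ is generic, $\gaugeG$ acts freely on the stable locus of $\mu_\gaugeG^{-1}(0)$ and we have
\[ T_p \XX \;=\; \ker d\mu_\gaugeG|_{\tilde p} \,/\, \fg \cdot \tilde p. \]
The induced $\gaugeG$-vector field at $\tilde p$ attached to $\xi = (\xi_e) \in \fg \subset \fd$ is $\sum_{e} \xi_e (x_e(\tilde p) \partial_{x_e} - y_e(\tilde p) \partial_{y_e})$, which lies in the span of $\{\partial_{x_e}, \partial_{y_e} : e \notin b\}$ since the coordinates with $e \in b$ vanish at $\tilde p$. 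On the other hand, $d\mu_\diag(\partial_{x_e}) = y_e \cdot e^\vee$ and $d\mu_\diag(\partial_{y_e}) = x_e \cdot e^\vee$, so for $e \in b$ the directions $\partial_{x_e}$ and $\partial_{y_e}$ both lie in $\ker d\mu_\gaugeG|_{\tilde p}$ and are transverse to the $\gaugeG$-orbit. Hence they descend to linearly independent tangent vectors in $T_p \XX$ which span the normal directions to the divisors $\{x_e = 0\}$ and $\{y_e = 0\}$ respectively.

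Finally, I would compute weights. The class $\partial_{x_e}$ carries $\diag$-weight $s_e$ and $\partial_{y_e}$ carries $\diag$-weight $-s_e$; restricting to $\diag_b$ and transporting via the isomorphism $\diag_b \cong \flavor$, these become $\alpha^b_e$ and $-\alpha^b_e$ respectively, yielding the claim. The only subtle point is the identification of the $\flavor$-action on $T_p \XX$ with a $\diag_b$-action: this is where totally unimodularity of $\fd_\Z \to \flavorlie_\Z$ (for $b$ to yield an \emph{isomorphism} rather than merely an isogeny) and the basis hypothesis are used. Everything else is bookkeeping with weights for the standard symplectic $\diag$-representation on $\cotan V$.
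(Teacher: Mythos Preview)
Your argument is correct. The paper actually states this lemma without proof, treating it as an immediate consequence of the definitions; your write-up supplies exactly the standard details one would expect (lift $p$ to a point $\tilde p$ with $x_e(\tilde p)=y_e(\tilde p)=0$ for $e\in b$, identify $\flavor$ with the stabilizer subtorus $\diag_b\subset\diag$, and read off the weights of $\partial_{x_e},\partial_{y_e}$ from the linear $\diag$-action on $\cotan V$). One small remark: the isomorphism $\diag_b\cong\flavor$ is already built into the paper's definition of a base (``the restriction of $\Z^E\to\flavorlie_\Z$ to $\Z^b$ is an isomorphism''), so you do not need to invoke total unimodularity separately at this step.
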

	\begin{corollary} \label{def:zetapositive}
	Let $e \in b$. Then the normal to $\{ x_e = 0 \}$ at $p$ is attracting for the cocharacter $\zeta$ if $\langle \alpha^p_e, \zeta \rangle > 0$ and repelling if $\langle \alpha^p_e, \zeta \rangle < 0$. 
	\end{corollary}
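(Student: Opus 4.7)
The plan is to deduce this directly from Lemma \ref{lem:charofnormal}, which identifies the $\flavor$-weight of the normal line to $\{x_e = 0\}$ at $p$ as $\alpha^p_e$. The statement is then a tautological unwinding of what it means for a weight space to be attracting or repelling for a one-parameter subgroup, so there is no essential content beyond the lemma and the definition of attracting cells recalled earlier in the K-theoretic stable envelope section.

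In more detail, I would proceed as follows. First, apply Lemma \ref{lem:charofnormal} to identify the one-dimensional $\flavor$-representation on the normal line $N_e \subset T_p\XX$ to $\{x_e = 0\}$ with the character $\alpha^p_e$. Second, restrict this $\flavor$-action along the cocharacter $\zeta : \C^\times \to \flavor$: an element $v \in N_e$ is sent by $\zeta(z)$ to $z^{\langle \alpha^p_e, \zeta \rangle} \cdot v$. Third, recall from the definition of $\leaf_\sigma(p)$ that a tangent direction is attracting for $\zeta$ when $\lim_{z \to 0} \zeta(z) \cdot v = 0$, which happens precisely when $\langle \alpha^p_e, \zeta \rangle > 0$, and repelling when the limit is divergent, which happens when $\langle \alpha^p_e, \zeta \rangle < 0$.

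Since this is purely a matter of bookkeeping of signs, I do not expect any real obstacle; the main thing to keep track of is the sign convention for attracting cells (taking $z \to 0$ rather than $z \to \infty$), which must be consistent with the convention fixed in the definition of $\leaf_\sigma(p)$ in the section on K-theoretic stable envelopes. Once this is verified, the corollary follows in one line.
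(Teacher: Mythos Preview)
Your proposal is correct and matches the paper's approach: the corollary is stated without proof immediately after Lemma \ref{lem:charofnormal}, since it is precisely the tautological unwinding you describe. Your check of the sign convention (attracting means $\lim_{z\to 0}\zeta(z)\cdot v = 0$, hence positive pairing) is the only thing to verify, and you have it right.
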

	We now turn our attention to $e \notin b$, and characterise which of the divisors $\{ x_e = 0 \}$ or $\{ y_e = 0 \}$ contains $p$. The map $\fg_\Z \to \Z^{b^c}$ is an isomorphism. Dualizing gives a map $\Z^{b^c} \to \fg^{\vee}$, and thus a basis of $\fg^{\vee}_\Z$. We let $\beta^p_e$ be the dual basis of $\fg_\Z$. 

		\begin{lemma} \label{lem:etapositive}
		There is a unique coordinate lagrangian $L_\eta \subset \cotan  \C^{E \setminus b}$ containing an $\eta$-semistable point, cut out by $x_e = 0$ for $\langle \beta^p_e, \eta \rangle < 0$ and $y_e = 0$ for $\langle \beta^p_e, \eta \rangle > 0$ . We have  $p = L_\eta \sslash_\eta \gaugeG$.
	\end{lemma}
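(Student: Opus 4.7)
The plan is to analyse each candidate coordinate Lagrangian $L \subset \cotan \C^{E \setminus b}$ as a linear $\gaugeG$-representation, apply the King/Mumford criterion for GIT semistability of torus actions, and then match the unique winner with $\fixed_b$ using the preceding lemma.

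First I would observe that every such $L$ automatically satisfies $L \subset \mu_\gaugeG^{-1}(0)$. Indeed, on $\cotan \C^{E \setminus b}$ the components $x_e y_e$ of $\mu_\diag$ vanish for $e \in b$ by definition and for $e \in b^c$ because $L$ is cut out by $x_e = 0$ or $y_e = 0$; hence $\mu_\diag \equiv 0$ on $L$, so $\mu_\gaugeG \equiv 0$. More strongly, since the restriction of $\partial^\vee$ to $\fd^\vee_{b^c}$ is an isomorphism onto $\fg^\vee$ (dual to the defining isomorphism $\fg \cong \fd_{b^c}$), the subscheme $\cotan \C^{E \setminus b} \cap \mu_\gaugeG^{-1}(0)$ is exactly the union of the $2^{|b^c|}$ coordinate Lagrangians, so $\fixed_b$ depends only on which of these contain $\eta$-semistable points.

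Next I would compute the weights of $\gaugeG$ on the surviving coordinates of $L$. The $\diag$-character of $x_e$ is $e_e^* \in \fd^\vee$, so the $\gaugeG$-character is $\partial^\vee(e_e^*)$; by construction $\{\partial^\vee(e_e^*)\}_{e \in b^c}$ is precisely the basis of $\fg^\vee_\Z$ dual to $\{\beta^p_e\}_{e \in b^c}$. Writing $\epsilon_e = +1$ if the surviving coordinate at $e$ is $x_e$ and $\epsilon_e = -1$ otherwise, the $\gaugeG$-weights on $L$ are the signed basis $\{\epsilon_e\, \partial^\vee(e_e^*)\}_{e \in b^c}$. Expanding $\eta = \sum_{e} \langle \eta, \beta^p_e \rangle\, \partial^\vee(e_e^*)$, the King criterion says $z \in L$ is $\eta$-semistable iff $\eta$ lies in the positive $\R$-span of the weights at the nonvanishing coordinates of $z$; uniqueness of expansion in a basis then forces $\epsilon_e = \operatorname{sign}\langle \eta, \beta^p_e \rangle$ and, by genericity of $\eta$ (which also ensures $\langle \eta, \beta^p_e \rangle \neq 0$), every coordinate of $z$ to be nonzero. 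This simultaneously yields the existence and uniqueness of $L_\eta$ with the sign pattern stated in the lemma.

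Finally, on $L_\eta$ the semistable locus is the open torus $(\C^\times)^{b^c}$, on which $\gaugeG$ acts with a basis of weights, hence freely and transitively. Thus $L_\eta \sslash_\eta \gaugeG$ is a single reduced point; combined with the previous paragraph and the preceding lemma, this gives $p = \fixed_b = L_\eta \sslash_\eta \gaugeG$. The main delicacy lies in matching sign conventions between the dualisation $\fg_\Z \cong \Z^{b^c}$ and the induced basis $\{\beta^p_e\}$; once these are pinned down, the rest is standard toric GIT and presents no genuine obstacle.
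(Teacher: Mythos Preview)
Your argument is correct. The paper states this lemma without proof, treating it as a standard fact about toric GIT; your write-up via the Hilbert--Mumford/King criterion is exactly the expected verification, and the identification $\fixed_b = L_\eta \sslash_\eta \gaugeG$ follows as you say from the decomposition of $\cotan \C^{E\setminus b}\cap\mu_\gaugeG^{-1}(0)$ into coordinate Lagrangians together with the preceding lemma.
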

\begin{corollary}
Let $e \notin b$. Then $p \in \{ x_e = 0 \}$ if $\langle \beta^p_e, \eta \rangle < 0$ and $p \in \{ y_e = 0 \}$ if $\langle \beta^p_e, \eta \rangle > 0$
\end{corollary}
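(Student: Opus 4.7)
The corollary is an essentially immediate consequence of Lemma \ref{lem:etapositive}, and my plan is to make this observation explicit rather than introduce new machinery.

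First, I would recall the setup: the divisors $\{x_e = 0\}$ and $\{y_e = 0\}$ in $\XX_\eta$ arise by descent from the corresponding $\gaugeG$-invariant coordinate hyperplanes in $\cotan V$, intersected with $\mu_\gaugeG^{-1}(0)$ and quotiented by the GIT action. Since $p = L_\eta \sslash_\eta \gaugeG$ is represented by any $\eta$-semistable point of $L_\eta$, to check membership in such a divisor it suffices to check that the corresponding coordinate vanishes on $L_\eta$ itself.

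Now for $e \notin b$, Lemma \ref{lem:etapositive} explicitly describes the defining equations of $L_\eta \subset \cotan \C^{E \setminus b}$: the equation $x_e = 0$ is imposed precisely when $\langle \beta^p_e, \eta\rangle < 0$, and the equation $y_e = 0$ is imposed precisely when $\langle \beta^p_e, \eta\rangle > 0$. (Genericity of $\eta$ ensures one of the two strict inequalities holds.) Passing to the GIT quotient yields the containment $p \in \{x_e = 0\}$ or $p \in \{y_e = 0\}$ accordingly.

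The only nontrivial point worth flagging is that the containment survives the GIT quotient, i.e.\ that the locally closed subset $L_\eta \cap \{x_e = 0\}$ of $L_\eta$ maps into the divisor $\{x_e = 0\} \subset \XX_\eta$ under the quotient map. This is automatic because $x_e$ is a $\gaugeG$-semi-invariant section whose vanishing locus in $\mu_\gaugeG^{-1}(0)^{ss}$ descends to the divisor of the same name on $\XX_\eta$. Since there is no delicate estimation, no obstacle beyond bookkeeping; the entire proof is a short paragraph quoting the previous lemma.
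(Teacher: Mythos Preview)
Your proposal is correct and matches the paper's approach: the paper states this corollary without proof, treating it as immediate from Lemma \ref{lem:etapositive}, and your paragraph simply makes that deduction explicit.
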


Fix a generic cocharacter $\zeta \in \flavorlie_\Z$. 
	\begin{definition} \label{def:vermalag}
		Let $\leaf_\zeta^n(p) \subset X$ be the singular lagrangian defined by intersecting $\{ y_e = 0 \}$ for $\langle \zeta, \alpha^p_e \rangle > 0$ with $\{ x_e = 0 \}$ for $\langle \zeta, \alpha^p_e \rangle < 0$.
		\end{definition}
	It is a union of components of $\leaf_\zeta^f(p)$, and is precisely the support of the $K$-theoretic stable envelope of $p$, although we will not use this fact below.
	
	We have the following useful characterisation of the fixed points which lie in this set.
		\begin{lemma} \label{lem:coordinatesofattn}
 Let $b_p, b_q$ be the bases associated to $p,q \in X^\flavor$.
$q \in \leaf_{\zeta}^n(p)$ if and only if $\langle \alpha^p_e, \zeta \rangle \langle \beta^q_e, \eta \rangle > 0$ for all $e \in b_q \cap b_p^c$. 
\end{lemma}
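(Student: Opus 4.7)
The plan is to reduce $q\in\leaf_\zeta^n(p)$ to a finite collection of divisor-memberships for $q$ and then translate each one into the claimed sign inequality using the characterisations of coordinate divisors at torus fixed points proved in the preceding lemmas. By Definition \ref{def:vermalag}, $\leaf_\zeta^n(p)=\bigcap_{e\in b_p}V_e$, where $V_e=\{y_e=0\}$ when $\langle\zeta,\alpha^p_e\rangle>0$ and $V_e=\{x_e=0\}$ when $\langle\zeta,\alpha^p_e\rangle<0$; genericity of $\zeta$ guarantees $\langle\zeta,\alpha^p_e\rangle\ne 0$. Hence the question reduces to: for which pairs $(p,q)$ does $q\in V_e$ hold for every $e\in b_p$?

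I would split the index set $b_p$ according to membership in $b_q$. For $e\in b_p\cap b_q$, the schematic identity $q=\bigcap_{e'\in b_q}\{x_{e'}=y_{e'}=0\}$ forces $q$ to lie in both $\{x_e=0\}$ and $\{y_e=0\}$, so $q\in V_e$ is automatic and no constraint is imposed. For the remaining indices $e\in b_p\setminus b_q$ one has $e\notin b_q$, and the corollary following Lemma \ref{lem:etapositive} pins down exactly which of the two candidate divisors contains $q$: namely $q\in\{x_e=0\}$ iff $\langle\beta^q_e,\eta\rangle<0$ and $q\in\{y_e=0\}$ iff $\langle\beta^q_e,\eta\rangle>0$. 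Comparing this dichotomy with the rule defining $V_e$ from the sign of $\langle\alpha^p_e,\zeta\rangle$ shows that $q\in V_e$ iff the two signs agree, i.e. iff $\langle\alpha^p_e,\zeta\rangle\langle\beta^q_e,\eta\rangle>0$. Conjoining over the nontrivial indices yields the stated equivalence.

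The argument is essentially pure sign bookkeeping, so no step presents a real obstacle. The only subtlety worth flagging is the matching of the natural indexing $b_p\setminus b_q$ coming from the definition of $\leaf_\zeta^n(p)$ with the indexing $b_q\cap b_p^c$ appearing in the statement: both sets parametrise halves of the symmetric difference $b_p\triangle b_q$ (and have equal cardinality), and one must check that the extensions of $\alpha^p_e$ and $\beta^q_e$ being implicitly invoked produce the correct product under this reindexing. Once this identification is in hand, the verification is immediate.
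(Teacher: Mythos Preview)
The paper states this lemma without proof, so there is no argument to compare against; your sign-tracking via Definition~\ref{def:vermalag} and the corollary to Lemma~\ref{lem:etapositive} is exactly the intended verification, and the core of it is correct.

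The point you flag in your final paragraph, however, deserves a firmer resolution rather than a hedge. The index set $b_q\cap b_p^c$ in the printed statement is simply a typo for $b_p\cap b_q^c$: for $e\in b_q\cap b_p^c$ one has $e\notin b_p$ and $e\in b_q$, so neither $\alpha^p_e$ (defined only for $e\in b_p$) nor $\beta^q_e$ (defined only for $e\notin b_q$) exists. There is no ``extension'' or ``reindexing'' that reconciles the two halves of the symmetric difference here; the quantities in the statement are literally undefined on the stated index set. Your derivation already produces the condition over $e\in b_p\setminus b_q=b_p\cap b_q^c$, and that is the correct form of the lemma. You should say so directly rather than leaving the reader to hunt for a nonexistent identification.
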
 
	\subsection{Symplectic duality for polarized hyperplane arrangements, or Gale duality}  \label{sec:galedualhypertoric}
Symplectic duality as defined in \cite{braden2014quantizations} may be thought of as a relation between two symplectic resolutions (or more generally, symplectic singularities). We refer to that paper for the general concept: here we will content ourselves with a review of the construction of the symplectic dual of a hypertoric variety $\XX_\eta$, in order to fix notation. 

Consider a sequence of tori as in \ref{eq:basicsequence}, together with a character $\eta$ of $\gaugeG$ and a cocharacter $\zeta$ of $\flavor$. We define the Gale dual data to be
	\begin{enumerate}
		\item The set $\edges$.
		\item The dual sequence of tori 
		\begin{equation} \label{eq:dualbasicsequence} 1 \to \dualflavor  \to \diag^{\vee} \to \gaugeG^{\vee} \to 1 \end{equation} 
		with the induced isomorphism $\diag^{\vee} \cong (\C^\times)^{\edges}$.
		\item The character $-\zeta$ of $\dualflavor $.
		\item The cocharacter $-\eta$ of $\gaugeG^{\vee}$.
	\end{enumerate}
	The torus $\diag^{\vee}$ acts on $T^* V^{\vee}$. We define $X_{-\zeta}^!$ as the symplectic reduction of $\cotan  V^{\vee}$ by the induced action of $\dualflavor $ with GIT parameter $-\zeta$. We will write $\check{x}_e, \check{y}_e$ for the natural coordinates on $\cotan V^{\vee}$. 
	
	In general, we will use the shriek superscript to indicate that we are working with $X_{-\zeta}^!$ rather than $X_{-\zeta}$. In particular, we set $\edges^! := \edges$. There is a natural bijection of the bases $\bases \cong \bases^!$ given by taking $b \subset \edges$ to its complement $b^c \subset \edges$.
	
	\begin{definition}
		Given a fixed point $p \in \XX_{-\zeta}^\flavor$ indexed by $b \subset \edges$, we write $p^! \in (\XX_{-\zeta}^{!})^{\gaugeG^{\vee}}$ for the fixed point indexed by $b^c$.
	\end{definition} 
	
	The following is a direct consequence of the definitions.
	\begin{lemma} \label{lem:alphabeta}
	Let $e \in b$. Then $\alpha^p_e = \beta^{p^!}_e$.
	\end{lemma}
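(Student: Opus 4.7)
The plan is to unfold the defining data on both sides and observe that the two constructions correspond under the dualization of the defining short exact sequence. Gale duality replaces the SES $0 \to \fg_\Z \to \Z^\coordset \xrightarrow{\phi} \flavorlie_\Z \to 0$ with its dual $0 \to \flavorlie^\vee_\Z \xrightarrow{\phi^\vee} \Z^\coordset \to \fg^\vee_\Z \to 0$, so on the dual side the role of $\flavorlie_\Z$ is played by $\fg^\vee_\Z$, and the role of $\fg_\Z$ by $\flavorlie^\vee_\Z$. In particular $\alpha^p_e$ and $\beta^{p^!}_e$ both live in $\flavorlie^\vee_\Z$, so the equality makes sense.

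First I would reformulate the $\beta$-construction concretely. The iso $\fg_\Z \xrightarrow{T} \Z^{b^c}$ dualizes to $T^\vee:\Z^{b^c} \to \fg^\vee_\Z$, and a short computation shows that the dual basis to $\{T^\vee(e)\}_{e\in b^c}$ in $\fg_\Z$ is nothing but $\{T^{-1}(e)\}_{e\in b^c}$. So $\beta^p_e$ is simply the preimage of the standard basis vector $e \in \Z^{b^c}$ under $T$. Applying the same reformulation on the dual side: for $e\in b$, $\beta^{p^!}_e \in \flavorlie^\vee_\Z$ is the preimage of the standard basis vector $e \in \Z^b$ under the composition $T' : \flavorlie^\vee_\Z \xhookrightarrow{\phi^\vee} \Z^\coordset \twoheadrightarrow \Z^b$.

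Next I would identify $T'$ explicitly. For $\lambda \in \flavorlie^\vee_\Z$, the $e$-th coordinate of $T'(\lambda)$ is $\langle \lambda, \phi(e)\rangle = \langle \lambda, f_e\rangle$, where $f_e := \phi(e) \in \flavorlie_\Z$ denotes the image of the $e$-th standard basis vector. Since $e \in b$ and $b$ is a basis, $\{f_e\}_{e \in b}$ is the basis of $\flavorlie_\Z$ used in the definition of $\alpha^p_e$. Thus $T'(\beta^{p^!}_e) = e$ translates into the relations $\langle \beta^{p^!}_e, f_{e'}\rangle = \delta_{e,e'}$ for all $e,e' \in b$, which is exactly the defining condition of $\alpha^p_e$ as the dual basis to $\{f_{e'}\}_{e'\in b}$.

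The only real content of the argument is therefore bookkeeping: one must be careful to match the integral structures and to recognize that the construction of $\beta$ on the dual side uses the transpose of the same block of the defining matrix that defines $\alpha$ on the original side. The main obstacle, such as it is, lies in not getting confused between the four lattices $\fg_\Z$, $\flavorlie_\Z$, $\fg^\vee_\Z$, $\flavorlie^\vee_\Z$ and which of them plays which role on each side; once the correspondence $T' = (\phi|_{\Z^b})^\vee$ is identified, the equality $\alpha^p_e = \beta^{p^!}_e$ is immediate.
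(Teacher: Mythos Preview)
Your argument is correct and is precisely the unwinding the paper has in mind: the text gives no proof beyond the remark that the lemma ``is a direct consequence of the definitions,'' and your identification of $\beta^{p^!}_e$ as $(T')^{-1}(e)$ together with $T' = (\phi|_{\Z^b})^{\vee}$ is exactly that unwinding.
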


			\section{Cohomology and K-theory of hypertoric varieties}
			
			\subsection{The Kirwan map}
	\begin{definition} 
		Let $$\kappa : \Rep \diag \times \C^\times_\hbar = K_{\diag \times \C^\times_\hbar}(T^*V) \to K_{\flavor \times \C^\times_\hbar}(\mu_\gaugeG^{-1}(0) \sslash \gaugeG)$$ be the composition of the restriction to $\mu_\gaugeG^{-1}(0)$ with the Kirwan map, which takes a representation $R$ of $\diag \times \C^{\times}_\hbar$ to the class of the associated bundle $R \times^\gaugeG \mu_\gaugeG^{-1}(0)^{\gaugeG-ss}$.
	\end{definition} 
		
	\begin{definition}
		Given $e \in \coordset$, let $\chi_e$ be the $\diag \times \C^\times_\hbar$-character of $x_e \in \mathscr{O}(T^*V)$, and let \[ u_e := \kappa(\chi_e) \in K_{\flavor \times \C^\times}(X). \] 
	\end{definition}
	Thus $u_e$ represents an equivariant line bundle on $X$. The dual Darboux coordinate $y_e$ has character $\hbar^{-1} \chi_e^{-1}$, defining the bundle $\hbar^{-1} u_e^{-1}$.
	
	Let $\check{\chi}_e$ be the character of $\check{x}_e$ under $\diag^{\vee}$. We have the analogous definition:
	\begin{definition}
		$$\check{u}_e := \kappa(\check{\chi}_e) \in K_{\gaugeG^{\vee} \times \C^\times_\hbar}(X^!).$$ 
	\end{definition}

\begin{definition}
Given any coordinate Lagrangian subspace $L \subset \cotan  V$, we define a corresponding polarisation of $X$ by viewing $L$ as a representation of $\diag \times \C^\times$ and taking its image under the Kirwan map.	
\end{definition}
	
	\subsection{Restriction to a fixed point}
	
We recall some known facts about the classes $u_e, \check{u}_e$. The following is essentially a restatement of Lemma \ref{lem:charofnormal}.
	\begin{lemma}
		Let $p \in \XX^\flavor$ be indexed by the base $b$. Let $e \in b$. Then the image of $u_e|_p$ under the map $H^{\bullet}_{\flavor \times \C^\times_\hbar}(p, \C) \to H^{\bullet}_{\flavor}(p, \C)$ equals $\alpha^p_e$.
		\end{lemma}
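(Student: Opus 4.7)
The plan is to identify $u_e|_p$ with the normal line to the divisor $\{x_e = 0\} \subset \XX$ at $p$, after which the claim reduces directly to Lemma \ref{lem:charofnormal}.

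First I would observe that the coordinate function $x_e$ is a $\diag \times \C^\times_\hbar$-equivariant map $\cotan V \to \C_{\chi_e}$. Restricting to $\mu_\gaugeG^{-1}(0)^{ss}$ and applying the associated-bundle construction packaged into $\kappa$, it yields a $\flavor \times \C^\times_\hbar$-equivariant section $\bar x_e$ of the line bundle $u_e = \kappa(\chi_e)$ on $\XX$, whose vanishing scheme is the divisor $D_e := \{x_e = 0\} \subset \XX$.

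Next, for $e \in b$ the fixed point $p$ is represented by a point $\tilde p \in \mu_\gaugeG^{-1}(0)^{ss}$ with $x_e(\tilde p) = y_e(\tilde p) = 0$, so $p \in D_e$. Smoothness of $\XX$ at $p$ implies that $D_e$ is a smooth divisor there, and $\bar x_e$ vanishes transversely. The derivative $d\bar x_e|_p : T_p \XX \to u_e|_p$ therefore vanishes exactly on $T_p D_e$ and induces a $\flavor \times \C^\times_\hbar$-equivariant isomorphism
\[ N_{D_e/\XX,\, p} \xrightarrow{\sim} u_e|_p. \]

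Finally, Lemma \ref{lem:charofnormal} gives the $\flavor$-character of the left-hand side as $\alpha^p_e$; since $\chi_e$ carries trivial $\C^\times_\hbar$-weight, the same is true of $u_e|_p$. Under the standard identification of a character $t^\mu$ with its first Chern class $\mu \in \flavorlie^\vee$, the image of $u_e|_p$ in $H^\bullet_\flavor(p,\C)$ is exactly $\alpha^p_e$. The argument is essentially mechanical once the isomorphism $u_e|_p \cong N_{D_e/\XX, p}$ is set up; the only place requiring a touch of care is tracking equivariance through the associated-bundle construction, which I expect to follow cleanly from the manifest $\flavor \times \C^\times_\hbar$-equivariance of $\bar x_e$ itself.
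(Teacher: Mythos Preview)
Your approach is exactly what the paper intends: it states the lemma as ``essentially a restatement of Lemma \ref{lem:charofnormal}'', and your identification of $u_e|_p$ with the normal line $N_{D_e/\XX,p}$ via the section $\bar x_e$ is the right way to make that restatement precise.

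One remark, however, is inaccurate. You assert that since $\chi_e$ has trivial $\C^\times_\hbar$-weight, so does $u_e|_p$. This does not follow: the $\flavor \times \C^\times_\hbar$-weight of $u_e|_p$ is computed by restricting $\chi_e$ to the stabiliser of a lift $\tilde p$ inside $\diag \times \C^\times_\hbar$, and in general $\C^\times_\hbar$ does not fix $\tilde p$ (it moves the nonzero $y$-coordinates for those $e' \notin b$ with $\langle \beta^p_{e'}, \eta \rangle < 0$). The resulting embedding $\flavor \times \C^\times_\hbar \hookrightarrow \diag \times \C^\times_\hbar$ is skew, and $\chi_e$ can pick up a nontrivial power of $\hbar$; this is precisely the $\epsilon^p_e$ in the paper's notation $u_e|_p = \alpha^p_e \hbar^{\epsilon^p_e}$. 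Fortunately, this claim is unnecessary: the lemma only asks for the image under $H^\bullet_{\flavor \times \C^\times_\hbar}(p) \to H^\bullet_\flavor(p)$, which simply forgets the $\hbar$-component. So your argument stands once you drop that sentence.
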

\begin{lemma} \label{lem:resgiveshbar}
	Keep the notations of the previous lemma, but suppose $e \notin b$. We have $u_e|_p = \hbar$ if $\langle \beta_e, \eta \rangle > 0$, and $u_e|_p = 1$ if $\langle \beta_e, \eta \rangle < 0$.
\end{lemma}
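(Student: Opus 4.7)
The plan is to compute $u_e|_p$ by pulling everything back to a concrete representative of $p$ and reading off the relevant character directly. The two main inputs are Lemma \ref{lem:etapositive}, which pins down exactly which of $x_e$, $y_e$ vanishes at such a representative, together with the standard description of the Kirwan map on characters at a fixed point.

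First I would pick a preimage $\tilde p \in \mu_\gaugeG^{-1}(0)^{ss}$ of $p$ lying in the coordinate Lagrangian $L_\eta$. For $\eta$ generic the $\gaugeG$-action on $\mu_\gaugeG^{-1}(0)^{ss}$ is free, and $L_\eta \sslash_\eta \gaugeG = \{p\}$ consists of a single $\gaugeG$-orbit, so one may additionally arrange that every free coordinate of $L_\eta$ is nonzero at $\tilde p$. For $e \notin b$ this means exactly one of $x_e(\tilde p)$, $y_e(\tilde p)$ is nonzero, the choice being determined by the sign of $\langle \beta^p_e, \eta\rangle$ via Lemma \ref{lem:etapositive}. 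Next, because $p$ is $\flavor$-fixed, the $\diag$-orbit of $\tilde p$ coincides with its $\gaugeG$-orbit, so the stabilizer of $\tilde p$ in $\diag \times \C^\times_\hbar$ projects isomorphically onto $\flavor \times \C^\times_\hbar$; let $s_{\tilde p}$ denote the inverse splitting.

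From the definition of the associated bundle $\C_{\chi} \times^{\gaugeG} \mu_\gaugeG^{-1}(0)^{ss}$ it is then immediate that for any character $\chi$ of $\diag \times \C^\times_\hbar$ one has $\kappa(\chi)|_p = \chi \circ s_{\tilde p}$ in $K_{\flavor \times \C^\times_\hbar}(\mathrm{pt})$; in particular $u_e|_p = \chi_e \circ s_{\tilde p}$. Because $s_{\tilde p}(\flavor \times \C^\times_\hbar)$ fixes $\tilde p$ it must fix every nonzero coordinate. If $x_e(\tilde p) \neq 0$, then since $x_e$ has $\diag \times \C^\times_\hbar$-weight $\chi_e$ this forces $\chi_e \circ s_{\tilde p} = 1$; if instead $y_e(\tilde p) \neq 0$, then since $y_e$ has weight $\hbar \chi_e^{-1}$, the same reasoning gives $\chi_e \circ s_{\tilde p} = \hbar$. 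Combined with the dichotomy from Lemma \ref{lem:etapositive}, this produces the two cases of the claim.

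The only mildly delicate step is the identification $\kappa(\chi)|_p = \chi \circ s_{\tilde p}$, which relies on the freeness of the $\gaugeG$-action together with the fact that on a $\flavor$-fixed $\gaugeG$-orbit the $\flavor$-equivariant structure on the associated bundle is canonically computed by how $s_{\tilde p}$ acts on $\C_\chi$. Once that identification is made, the remaining work is a bookkeeping check against Lemma \ref{lem:etapositive}.
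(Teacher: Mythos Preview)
The paper does not prove this lemma; it is stated among several ``known facts'' without argument. Your approach via a semistable lift $\tilde p$ and the stabiliser splitting $s_{\tilde p}$ is the standard one, and the key identifications $\kappa(\chi)|_p=\chi\circ s_{\tilde p}$ and $x_e(\tilde p)\neq 0\Rightarrow \chi_e\circ s_{\tilde p}=1$, $y_e(\tilde p)\neq 0\Rightarrow \chi_e\circ s_{\tilde p}=\hbar$ are both correct.

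The gap is in your last sentence. If you actually carry out the ``combination with Lemma~\ref{lem:etapositive}'' you obtain the \emph{opposite} of the stated lemma: by Lemma~\ref{lem:etapositive}, $\langle\beta^p_e,\eta\rangle>0$ forces $y_e=0$ on $L_\eta$, so $x_e(\tilde p)\neq 0$, and then your own computation yields $u_e|_p=1$, not $\hbar$; symmetrically $\langle\beta^p_e,\eta\rangle<0$ gives $u_e|_p=\hbar$. Thus Lemma~\ref{lem:etapositive} and Lemma~\ref{lem:resgiveshbar}, read literally, are inconsistent, and your argument as written proves the negation of the claim. You should not paper over this with ``this produces the two cases of the claim''; instead either check a small example (e.g.\ $T^*\mathbb{P}^1$ with the paper's GIT convention) to determine which of the two lemmas carries the sign slip and correct the citation, or bypass Lemma~\ref{lem:etapositive} entirely and derive the dichotomy $x_e(\tilde p)\neq 0$ versus $y_e(\tilde p)\neq 0$ directly from the $\eta$-semistability condition.
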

We introduce the notation $\epsilon^p_e \in \Z$ for the function such that $u_e|_p = \alpha^p_e \hbar^{\epsilon^p_e}$ for $e \in b_p$ and $u_e|_p = \hbar^{\epsilon^p_e}$ for $e \notin b_p$. For $e \notin b$ we have $\epsilon^p_e = 0$ if and only if $\langle \beta^p_e, \eta \rangle < 0$.

\section{The class $\xi$}\label{sec:xi}
Consider the antidiagonal embedding $\C^{\times}_\hbar \to (\C^{\times}_\hbar)^2, z \to (z,z^{-1})$. We use this embedding to define the equivariant K-group $K_{\flavor \times \gaugeG^{\vee}\times \C^\times_{\hslash}}(X \times X^{!})$. Our main object of interest is the following class in this K-group.
\begin{definition}
$$ \xi := \prod_{e \in E} (1 - u_e \check{u}_e).$$

\end{definition}

We may think of $\xi$ as associated to the polarisation $V$ of $T^*V$, as follows. Recall that our construction of dual hypertorics in Sections \ref{sec:hypertoricvarieties} and \ref{sec:galedualhypertoric} starts from the tori $\diag\times \C^\times_\hbar, \diag^{\vee} \times \C^\times_\hbar$ acting on the spaces $\cotan V, \cotan V^{\vee}$. As torus representations, we have decompositions
\begin{align*}
V=\bigoplus_{e\in E} \chi_{e} \text{  and  } V^{\vee}=\bigoplus_{e\in E}\chi^{\vee}_{e}.
\end{align*}

\begin{definition}
	Let $$\tilde{\xi}:= \bigwedge^{\bullet} \left( \sum_{e}\chi_{e}\chi^{\vee}_{e} \right)$$
	viewed as an element of $K_{\diag\times \C^\times_\hbar \times \diag^{\vee} \times \C^\times_\hbar}(pt)$. 
\end{definition}	

Now we fix dual hypertorics $$X:=\cotan V\sslash_\eta \gaugeG, \,\,\,\,\,\,\,\, X^{!}:=\cotan V^{\vee}\sslash_\zeta \dualflavor $$ as in Sections \ref{sec:hypertoricvarieties} and \ref{sec:galedualhypertoric}. We also fix the auxiliary data which specifies stable envelopes on $X, X^!$. Thus, we fix suitably generic choices of $$\mathscr{L}^{X}\in \text{Pic}_{\flavor}(X)\otimes_{\mathbb{Z}}\mathbb{Q}, \,\,\,\,\,\, \mathscr{L}^{X^{!}}\in \text{Pic}_{\gaugeG^{\vee}}(X^{!})\otimes_{\mathbb{Z}}\mathbb{Q}.$$
Furthermore, we pick the usual polarizations $T^{1/2}_X$ (resp $T^{1/2}_{X^!}$) of $X_{\eta}$ (resp $X^{!}_{\zeta}$) induced by the image of $V$ (resp $V^{\vee}$) under the Kirwan map.

We have a Kirwan map
$ K_{\diag\times \C^\times_\hbar \times \diag^{\vee} \times \C^\times_\hbar}(pt) \to K_{\flavor \times \C^{\times}_\hbar \times \gaugeG^{\vee} \times \C^\times_\hbar}(X \times X^!).$
We further restrict along the antidiagonal embedding $\C^{\times}_\hbar \to (\C^{\times}_\hbar)^2, z \to (z,z^{-1})$ to obtain a map 
\begin{equation} \label{eq:antidiagkirwan}  K_{\diag\times \C^\times_\hbar \times \diag^{\vee} \times \C^\times_\hbar}(pt) \to K_{\flavor \times \gaugeG^{\vee} \times \C^\times_\hbar}(X \times X^!). \end{equation}

Thus  $\xi$ is the image of $\tilde{\xi}$ under the map \ref{eq:antidiagkirwan}.

\begin{definition} \label{rem:xi} 
Given any coordinate subspace $V' \subset \cotan V$, we define a class $\xi(V')$ as the image under the map $\ref{eq:antidiagkirwan}$ of the class
$$ \bigwedge^{\bullet} \left( \sum \chi_i \chi_i^{\vee} \right),$$ where $V' = \oplus \chi_i$ is the isotypic decomposition of $V'$ into $\diag$-characters.    \end{definition}

The class $\xi(V')$ satisfies a number of interesting properties analogous to the defining properties of the K-theoretic stable envelope. The rest of this section explores a few of these properties, which will not however be needed in the remainder of this paper. For simplicity, we focus our attention on $\xi = \xi(V)$ below.

\begin{lemma}
	Let $p,q \in X^\flavor$ such that $q \notin \leaf_\zeta^n(p)$ , or equivalently  $\langle \alpha^q_e, \zeta \rangle \langle \beta^p_e, \eta \rangle < 0$ for some $e \in b_q \cap b_p^c$.  Then $$u_e|_{p}\check{u}_e|_{q^!} = 1.$$
\end{lemma}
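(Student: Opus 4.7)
The plan is to reduce the computation to elementary restriction formulas via Lemma \ref{lem:resgiveshbar} and its analog on $X^!$, being careful about the antidiagonal identification of the $\hbar$-parameters.

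First I would observe that the hypothesis $e \in b_q \cap b_p^c$ translates into the two statements $e \notin b_p$ (as an index on $X$) and $e \notin b_{q^!}$ (as an index on $X^!$), since the Gale dual fixed point $q^!$ is by definition indexed by $b_q^c$. This places us in the regime of Lemma \ref{lem:resgiveshbar} on both factors. Applying that lemma to $u_e|_p \in K_{\flavor \times \C^\times_\hbar}(X)$ gives
\[ u_e|_p = \begin{cases} \hbar & \langle \beta^p_e, \eta \rangle > 0, \\ 1 & \langle \beta^p_e, \eta \rangle < 0. \end{cases} \]

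Next I would apply the same lemma on $X^!$, keeping track of the Gale-dual conventions of Section \ref{sec:galedualhypertoric}: the GIT character playing the role of $\eta$ is $-\zeta$, and by Lemma \ref{lem:alphabeta} we have $\beta^{q^!}_e = \alpha^q_e$ since $e \in b_q$. Thus
\[ \check{u}_e|_{q^!} = \begin{cases} \hbar_{X^!} & \langle \alpha^q_e, \zeta \rangle < 0, \\ 1 & \langle \alpha^q_e, \zeta \rangle > 0, \end{cases} \]
where $\hbar_{X^!}$ denotes the loop-rotation character on the dual factor. The crucial step is then to use the antidiagonal embedding $\C^\times_\hbar \hookrightarrow (\C^\times_\hbar)^2$, $z \mapsto (z, z^{-1})$, defining $K_{\flavor \times \gaugeG^{\vee} \times \C^\times_\hbar}(X \times X^!)$: under this identification $\hbar_{X^!}$ becomes $\hbar^{-1}$.

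Finally I would combine the two restrictions. The hypothesis $\langle \alpha^q_e, \zeta \rangle \langle \beta^p_e, \eta \rangle < 0$ means that the two pairings have opposite signs, leaving exactly two cases:
\[ u_e|_p \cdot \check{u}_e|_{q^!} \;=\; \begin{cases} 1 \cdot 1 = 1 & \text{if } \langle \beta^p_e, \eta \rangle < 0,\ \langle \alpha^q_e, \zeta \rangle > 0, \\ \hbar \cdot \hbar^{-1} = 1 & \text{if } \langle \beta^p_e, \eta \rangle > 0,\ \langle \alpha^q_e, \zeta \rangle < 0. \end{cases} \]
In either case the product is $1$, proving the lemma.

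The only subtlety is a bookkeeping one: verifying that the $\hbar$ on $X^!$ really does appear as $\hbar^{-1}$ in the common K-group (so that the sign-opposite hypothesis produces an exact cancellation rather than a factor of $\hbar^{\pm 2}$). Once the sign conventions for Gale duality and the antidiagonal embedding are set up correctly, the rest is a direct case check.
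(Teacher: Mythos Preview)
Your proof is correct and follows essentially the same approach as the paper: the paper's proof is a one-line citation of Lemmas \ref{lem:coordinatesofattn}, \ref{lem:alphabeta} and \ref{lem:resgiveshbar}, and you have simply spelled out the case check that those lemmas encode, including the antidiagonal identification $\hbar_{X^!} = \hbar^{-1}$.
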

\begin{proof}
	This follows from Lemma \ref{lem:coordinatesofattn}, Lemma \ref{lem:alphabeta} and Lemma \ref{lem:resgiveshbar}.
	\end{proof}  
\begin{corollary} \label{cor:restrictionvanishes}
The restriction $\xi_{p \times q^!}$ vanishes unless $p \in \leaf_\zeta^n(q)$, or equivalently  $\langle \alpha^q_e, \zeta \rangle \langle \beta^p_e, \eta \rangle > 0$ for all $e \in b_q \cap b_p^c$.
\end{corollary}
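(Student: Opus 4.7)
The proof should be essentially a one-line consequence of the preceding lemma, so my plan is to make the product structure of $\xi$ do all the work. Since $\xi = \prod_{e \in E}(1 - u_e \check{u}_e)$ is a product over the finite index set $E$, restriction to any fixed point factorizes:
\[ \xi|_{p \times q^!} = \prod_{e \in E}\bigl(1 - u_e|_p \cdot \check{u}_e|_{q^!}\bigr). \]
Thus it suffices to exhibit, whenever the condition in the corollary fails, a single index $e \in E$ for which the corresponding factor vanishes.

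Suppose then that $p \notin \leaf_\zeta^n(q)$. Unwinding this via Lemma \ref{lem:coordinatesofattn} (applied with the roles of $p,q$ swapped) gives an index $e \in b_q \cap b_p^c$ for which the product $\langle \alpha^q_e, \zeta\rangle \langle \beta^p_e, \eta\rangle$ is negative. This is precisely the hypothesis of the preceding lemma (after the trivial relabelling $p \leftrightarrow q$), so the corresponding factor $1 - u_e|_p\,\check{u}_e|_{q^!}$ equals $0$. Therefore the whole product vanishes, as required.

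I do not anticipate any real obstacle: the substantive content, namely the identity $u_e|_p \cdot \check{u}_e|_{q^!} = 1$ for appropriate $e$, has been established in the preceding lemma by combining the explicit fixed-point restriction formulas of Lemma \ref{lem:charofnormal} and Lemma \ref{lem:resgiveshbar} with the Gale-dual identification $\alpha^p_e = \beta^{p^!}_e$ from Lemma \ref{lem:alphabeta}. The only bookkeeping worth double-checking is the correct pairing of attracting/repelling directions on the two sides: on $X$ the index $e \in b_q \cap b_p^c$ plays an $\alpha$-role at $q$ and a $\beta$-role at $p$, whereas on $X^!$ these roles are exchanged, so the two factors $u_e|_p$ and $\check{u}_e|_{q^!}$ combine to produce an inverse pair of $\flavor \times \gaugeG^\vee \times \C^\times_\hbar$-characters whose product is trivial. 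This sign-matching is exactly what Gale duality guarantees, so the argument concludes cleanly.
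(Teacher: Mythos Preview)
Your argument is correct and is exactly the (implicit) reasoning behind the paper's corollary: the restriction of $\xi$ factors as $\prod_{e \in E}(1 - u_e|_p\,\check{u}_e|_{q^!})$, and the preceding lemma supplies an index $e$ making one factor vanish whenever the stated condition fails. The only quibble is that no relabelling $p \leftrightarrow q$ is actually required---the numerical condition $\langle \alpha^q_e,\zeta\rangle\langle \beta^p_e,\eta\rangle < 0$ for some $e \in b_q \cap b_p^c$ appears verbatim in both the lemma's hypothesis and the negation of the corollary's conclusion, so you can invoke the lemma directly.
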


\begin{lemma} \label{lem:xibound}
	\[\deg \xi_{p \times q^!}  \leq \deg \bigwedge^{\bullet} T^{1/2}_p X \otimes \bigwedge^{\bullet} T^{1/2}_{q^!} X^!. \]
	for $p \neq q$, and
	\[ \xi_{p \times p^!} = \bigwedge^{\bullet} T^{1/2}_p X \otimes \bigwedge^{\bullet} T^{1/2}_{p^!} X^!. \]
	Here all classes are taken equivariant with respect to the Hamiltonian subtorus $\flavor \times \gaugeG^{\vee} \subset \flavor \times \gaugeG^{\vee} \times \C^\times_\hbar$.
\end{lemma}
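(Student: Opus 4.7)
My plan is to compute $\xi_{p \times q^!}$ directly from the product formula $\xi = \prod_{e \in E}(1 - u_e \check{u}_e)$, which restricts as $\xi_{p \times q^!} = \prod_{e \in E}(1 - u_e|_p \cdot \check u_e|_{q^!})$, and match it against the right-hand side. The explicit formulas for $u_e|_p$ were given in Lemmas \ref{lem:charofnormal} and \ref{lem:resgiveshbar}, with analogous ones for $\check u_e|_{q^!}$ on the Gale-dual side. I must remember the antidiagonal embedding of $\C^{\times}_\hbar$, which pulls back $\hbar$ on the $X^!$ factor to $\hbar^{-1}$, and then pass to $\flavor \times \gaugeG^{\vee}$-equivariance, which sets $\hbar = 1$.

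I partition $E$ by intersecting with $b_p, b_p^c$ and $b_q, b_q^c$, and evaluate each factor. Under the $\flavor \times \gaugeG^{\vee}$-restriction the four cases reduce to: for $e \in b_p \cap b_q$, factor $1 - \alpha^p_e$; for $e \in b_p \cap b_q^c$, factor $1 - \alpha^p_e \alpha^{q^!}_e$, using Lemma \ref{lem:alphabeta} to identify $\alpha^{q^!}_e$ appropriately; for $e \in b_p^c \cap b_q^c$, factor $1 - \alpha^{q^!}_e$; and for $e \in b_p^c \cap b_q$, both restrictions become trivial and the factor collapses to $1 - 1 = 0$.

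For $p \neq q$, the bases $b_p, b_q$ are distinct and have equal cardinality, so $b_p^c \cap b_q \neq \emptyset$; the resulting vanishing factor forces $\xi_{p \times q^!} = 0$ and the degree bound is vacuous. For $p = q$, the two cross-pieces are both empty, and the product telescopes to
\[ \xi_{p \times p^!} = \prod_{e \in b_p}(1 - \alpha^p_e) \cdot \prod_{e \in b_p^c}(1 - \alpha^{p^!}_e). \]
For the right-hand side I use the Kirwan-image polarization $T^{1/2}_p X = \sum_e u_e|_p$, first pass to $\flavor$-equivariance (so the $e \notin b_p$ summands become trivial characters), then apply $\bigwedge^{\bullet}$, which by definition drops trivial summands. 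This yields $\bigwedge^{\bullet} T^{1/2}_p X = \prod_{e \in b_p}(1 - \alpha^p_e)$ and, likewise, $\bigwedge^{\bullet} T^{1/2}_{p^!} X^! = \prod_{e \in b_p^c}(1 - \alpha^{p^!}_e)$, matching the expression above.

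The only real subtlety is bookkeeping: one must pass to $\flavor \times \gaugeG^{\vee}$-equivariance \emph{before} applying $\bigwedge^{\bullet}$ (otherwise the characters $\hbar^{\epsilon^p_e}$ survive and produce spurious $(1-\hbar)$ factors that vanish upon specialization), track the effect of the antidiagonal embedding on the $\hbar$-weights, and invoke Lemma \ref{lem:alphabeta} to recognize the Gale-dual weights. The key geometric input is simply that bases of a matroid have common cardinality, which produces the vanishing factor whenever $p \neq q$.
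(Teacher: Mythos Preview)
Your proof is correct for the lemma as stated. For the equality at $p=q$, your argument is essentially identical to the paper's: both partition $E$ according to membership in $b_p$ and observe that, after setting $\hbar=1$, each factor $(1-u_e|_p\check u_e|_{p^!})$ contributes exactly one nontrivial weight of either $T^{1/2}_pX$ or $T^{1/2}_{p^!}X^!$.

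For $p\neq q$ your route differs from the paper's. You observe that any $e\in b_p^c\cap b_q$ (nonempty since bases have equal size) gives a factor $1-\hbar^{\epsilon^p_e-\epsilon^{q^!}_e}$, which becomes $1-1=0$ upon passing to $\flavor\times\gaugeG^{\vee}$-equivariance, so $\xi_{p\times q^!}=0$ and the degree bound is vacuous. The paper does not use this vanishing; instead it argues directly that the $\flavor\times\gaugeG^{\vee}$-weight of each $u_e|_p\check u_e|_{q^!}$ is a sum of at most one weight of $T^{1/2}_pX$ (namely $\alpha^p_e$, nonzero iff $e\in b_p$) and at most one weight of $T^{1/2}_{q^!}X^!$ (namely $\alpha^{q^!}_e$, nonzero iff $e\in b_q^c$), which immediately gives the containment of Newton polytopes. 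Your shortcut is cleaner for the lemma as written, but note that the paper's factor-by-factor argument establishes the same degree containment for the class in the larger ring $K_{\flavor\times\gaugeG^{\vee}\times\C^\times_\hbar}(pt)$ (where $\xi_{p\times q^!}$ need not vanish for $p\neq q$); it is this slightly stronger version that is implicitly invoked when bounding the localization summands in the proof of Proposition~\ref{prop:intertwinertheorem}.
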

\begin{proof}
	We have \begin{equation} \label{eq:xirestricted} \deg \xi|_{p \times q^!} = \deg \prod_{e \in \edges} (1 - u_e|_p \check{u}_e|_{q^!}) \end{equation}
	Let $b_p, b_q \subset \edges$ be the bases associated to $p, q$ respectively, so that $b_q^c$ is the base associated to $q^!$.

	The characters $u_e|_p$ for $e \in b_p$ are precisely the summands of $T^{1/2}_p X$ with nonzero $\flavor$-weight, and likewise for $T^{1/2}_{q^!} X^!$. This proves the first inequality. When $p=q$, each factor contains a single nontrivial character of either $T^{1/2}_p X$ or $T^{1/2}_{q^!} X^!$, thus proving the second equality.    
	
\end{proof}

The following Proposition is the main result of this section. It shows that $\xi$ intertwines the stable envelopes of $X,X^!$ in a certain limit. Consider the cocharacter $\zeta \times \eta^{-1} : \C^\times \to \flavor \times \gaugeG^{\vee}$. It defines a map $$K_{\flavor \times \gaugeG^{\vee} \times \C^\times_\hbar}(X \times X^!) \to K_{\C^\times \times \C^\times_\hbar}(X \times X^!).$$ We may view elements of the right-hand space as functions of the tautological character $t \in K_{\C^\times}(pt)$.
\begin{proposition} \label{prop:intertwinertheorem}
The image in $ K_{\C^\times \times \C^\times_\hbar}(X \times X^!)$ of 
\begin{align*}
	 (\mathscr{L}^{X}_{p}\otimes \mathscr{L}^{X^{!}}_{q^!}) \otimes \left \langle (\mathscr{L}^{X})^{-1} \otimes \xi\otimes (\mathscr{L}^{X^{!}})^{-1}, \operatorname{Stab}_{\zeta, T^{1/2}_{X, \oppo}, \mathscr{L}^{X}}(p)\otimes \operatorname{Stab}_{\eta, T^{1/2}_{X^!, \oppo}, \mathscr{L}^{X^{!}}}(q^{!}) \right \rangle
	\end{align*}
has a limit as $t \to \infty$. This limit equals $\left(\frac{\hbar}{1 - \hbar}\right)^{\rk \ind_p} \left(\frac{\hbar^{-1}}{1 - \hbar^{-1}} \right)^{\rk \ind_{p^!}}$ if $p=q$ and equals $0$ otherwise. \end{proposition}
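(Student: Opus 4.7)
The plan is to prove the proposition via equivariant K-theoretic localization of the pairing, followed by a term-by-term analysis of the resulting sum as $t\to\infty$ along the cocharacter $\zeta\times\eta^{-1}$. The three main inputs will be the support/restriction properties of stable envelopes, Lemma \ref{lem:xibound}, and Lemma \ref{lem:strictboundvanishinglimit}.

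First I would apply K-theoretic localization to express the pairing as a finite sum over pairs $(r,s)\in(X^{\flavor})^2$ of terms of the form
\[ \frac{\operatorname{Stab}_{\zeta,T^{1/2}_{X,\oppo},\mathscr{L}^X}(p)|_r\;\operatorname{Stab}_{\eta,T^{1/2}_{X^!,\oppo},\mathscr{L}^{X^!}}(q^!)|_{s^!}\;\xi|_{r\times s^!}}{\mathscr{L}^X|_r\;\mathscr{L}^{X^!}|_{s^!}\;\bigwedge^{\bullet} T^{\vee}_r X\;\bigwedge^{\bullet} T^{\vee}_{s^!} X^!}, \]
then multiply by the prefactor $\mathscr{L}^X_p\otimes\mathscr{L}^{X^!}_{q^!}$. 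Many terms vanish a priori: $\operatorname{Stab}_{\zeta,T^{1/2}_{X,\oppo},\mathscr{L}^X}(p)|_r=0$ unless $r\in\leaf^{f}_{\zeta}(p)$, the companion stable envelope vanishes unless $s^!\in\leaf^{f}_{\eta}(q^!)$, and Corollary \ref{cor:restrictionvanishes} implies $\xi|_{r\times s^!}=0$ unless $r\in\leaf^{n}_{\zeta}(s)$. For each remaining term I would invoke the slope-degree condition~3 of the stable envelope to bound $\deg_{\flavor}\bigl(\mathscr{L}^X_p\cdot\operatorname{Stab}_{\zeta,T^{1/2}_{X,\oppo},\mathscr{L}^X}(p)|_r\bigr)$ relative to $\mathscr{L}^X_r\cdot\operatorname{Stab}_{\zeta,T^{1/2}_{X,\oppo},\mathscr{L}^X}(r)|_r$, and symmetrically for $X^!$; together with Lemma \ref{lem:xibound} bounding $\deg\xi|_{r\times s^!}$ and the explicit degree of the denominator (which splits symmetrically via the polarization identity $T_r X=T^{1/2}_r X+\hbar^{-1}(T^{1/2}_r X)^{\vee}$), I would show that for every $(r,s^!)\neq(p,p^!)$ in the case $p=q$, and for every $(r,s^!)$ in the case $p\neq q$, the numerator's $\flavor\times\gaugeG^{\vee}$-degree is strictly contained in the denominator's, so Lemma \ref{lem:strictboundvanishinglimit} kills the term in the limit.

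For the one surviving term $(r,s^!)=(p,p^!)$ in the $p=q$ case, I would substitute $\xi|_{p\times p^!}=\bigwedge^{\bullet}T^{1/2}_p X\otimes\bigwedge^{\bullet}T^{1/2}_{p^!}X^!$ from Lemma \ref{lem:xibound} and the self-restriction formula (condition~2) for the stable envelopes. The slope prefactors $\mathscr{L}^X_p,\mathscr{L}^{X^!}_{p^!}$ cancel, and the $\flavor$- and $\gaugeG^{\vee}$-weighted summands of $T^{1/2}_p X$ and $T^{1/2}_{p^!}X^!$ cancel against the corresponding parts of the denominator via the polarization identity. What remains is a product over the weight-zero summands (i.e.\ $e\notin b_p$ in $T^{1/2}_p X$ and $e\notin b_{p^!}$ in $T^{1/2}_{p^!}X^!$), whose restrictions are pure powers of $\hbar$ by Lemma \ref{lem:resgiveshbar}; these repackage into $\bigl(\hbar/(1-\hbar)\bigr)^{\rk\ind_p}\bigl(\hbar^{-1}/(1-\hbar^{-1})\bigr)^{\rk\ind_{p^!}}$.

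The main obstacle is establishing the strict containment of degrees in the vanishing step, since each of the three input bounds is individually only a weak containment. Strictness has to come from combining them while exploiting the genericity of $\zeta,\eta,\mathscr{L}^X,\mathscr{L}^{X^!}$ together with the hypertoric combinatorics (Lemmas \ref{lem:charofnormal}, \ref{lem:etapositive}, \ref{lem:coordinatesofattn}, \ref{lem:alphabeta}). A likely route is to break into cases according to how the bases $b_r,b_s\subset E$ differ from $b_p,b_q$ and to argue that simultaneous equality in all three bounds forces $(r,s^!)=(p,p^!)$ with $p=q$.
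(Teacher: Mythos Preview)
Your approach is essentially the same as the paper's: localize the pairing, use the stable envelope degree bounds together with Lemma~\ref{lem:xibound} to see that each summand is bounded, and invoke Lemma~\ref{lem:strictboundvanishinglimit}. Two points are worth correcting.

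First, you worry that all three input bounds are only weak, but in fact the stable envelope bound in condition~3 is automatically \emph{strict} when $r\neq p$ (and likewise for $s^!\neq q^!$), as a direct consequence of the genericity of the slope $\mathscr{L}^X$. This is standard and is exactly what the paper uses: the combination of the stable envelope bound (strict off the diagonal) with Lemma~\ref{lem:xibound} (weak) already shows that every summand with $(r,s^!)\neq(p,q^!)$ is strictly bounded. No further combinatorics with bases is needed for these terms.

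Second, your proposed case split is slightly off. You want to kill \emph{every} term when $p\neq q$ via degree bounds, but at $(r,s^!)=(p,q^!)$ both stable envelope restrictions are diagonal, so neither is strict, and Lemma~\ref{lem:xibound} is also not asserted to be strict there. The paper does not attempt to get strictness here by a degree argument. Instead it reduces \emph{both} the $p=q$ and the $p\neq q$ cases to the single summand $(r,s^!)=(p,q^!)$ and then computes that summand explicitly in Lemma~\ref{lem:pneqq}: the factors indexed by $e\in b_p\cap b_q^c$ (type~\eqref{eq:firstfactor}) vanish in the limit because $\langle\alpha^p_e,\zeta\rangle\langle\alpha^{p^!}_e,-\eta\rangle<0$, which forces the whole product to $0$ when $p\neq q$; when $p=q$ there are no such factors and the remaining ones limit to the stated powers of $\hbar/(1-\hbar)$ and $\hbar^{-1}/(1-\hbar^{-1})$. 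Your sketch of the $p=q$ endgame (``weighted parts cancel, weight-zero parts give the answer'') is morally right but imprecise: the $t$-dependent factors do not cancel identically, they limit to $1$ or to $\hbar^{\pm 1}/(1-\hbar^{\pm 1})$ depending on the sign of the relevant pairing, and counting these is what produces the exponents $\rk\ind_p$ and $\rk\ind_{p^!}$.
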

	Here $\ind_p = T^{1/2}_{p, >0}$ is the index bundle at $p$, and $\ind_{p^!} = T^{1/2}_{p^!, >0}$ is the index bundle at $p^!$. Note that $\rk \ind_p$ is the number of $e$ for which $\langle \alpha^{p}_e, \zeta \rangle  > 0$.

\begin{remark}
By Proposition \ref{prop:stablesaredual}, we may reinterpret the Proposition as follows. Viewed as a correspondence, $(\mathscr{L}^{X})^{-1} \otimes \xi\otimes (\mathscr{L}^{X^{!}})^{-1}$ defines a map from $K_{\flavor \times \C^\times_\hbar})(X)_{\loc}$ to $K_{\gaugeG^{\vee} \times \C^\times_\hbar}(X^!)_{\loc}$. In the basis defined by $K$-theoretic stable envelopes, this map is represented by a diagonal matrix in the limit $t \to \infty$.
\end{remark}

\begin{proof} 	Write 
	\begin{align*}
	A&:= (\mathscr{L}^{X})^{-1}\otimes \xi\otimes (\mathscr{L}^{X^{!}})^{-1}\notag\\
	B^{p,q^!}&:= (\mathscr{L}^{X}_{p}\otimes \mathscr{L}^{X^{!}}_{q^!}) \otimes \operatorname{Stab}_{\zeta, T^{1/2}_{X, opp}, \mathscr{L}^{X}}(p)\otimes \operatorname{Stab}_{\eta, T^{1/2}_{X^!, opp}, \mathscr{L}^{X^{!}}}(q^{!}).
	\end{align*}
	
	We use the  localization theorem for equivariant K-theory, which expresses our pairing as a sum over fixed points:
 \begin{equation} \label{eq:localization} \langle A, B^{p \times q^!}\rangle =\sum
	_{x\in X^{\flavor}, y^{!}\in (X^{!})^{\gaugeG^{\vee}}}\frac{A_{x\times y^{!}}\otimes B^{p \times q^!}_{x\times y^{!}}}{\bigwedge^{\bullet}(T_{x}X)^{\vee}\otimes \bigwedge^{\bullet}(T_{y^{!}}X^{!})^{\vee}}.\end{equation}
	We consider the right-hand side summand by summand. By Corollary \ref{cor:restrictionvanishes}, we may assume $x \in \leaf^n_\zeta(y)$.
	
	We have a restriction map $K_{\flavor \times \gaugeG^{\vee}}(X \times X^!) \to K_{\C^\times}(X \times X^!)$ induced by our choice of cocharacter. Below we write $\deg$ for the degree with respect to $\C^\times$. 
		By definition of the stable envelope, we have 
	$$\operatorname{deg} B^{p \times q^!}_{x\times y^{!}}\leq \operatorname{deg} \bigwedge^{\bullet}(T^{1/2}_{X,x})^{\vee}\otimes \bigwedge^{\bullet}(T^{1/2}_{X^{!},y^{!}})^{\vee}\otimes \mathscr{L}^X_{x} \otimes \left( \mathscr{L}^{X^!}_{y^!}  \right)^{-1} .$$
	with a strict inequality when $x \times y^! \neq p \times q^!$. On the other hand, by Lemma \ref{lem:xibound} we have  $\operatorname{deg}\xi\mid_{x\times y^{!}} \leq \operatorname{deg} \bigwedge^{\bullet}T^{1/2}_{X,x}\otimes \bigwedge^{\bullet}T^{1/2}_{X^{!},y^{!}}$. 
			
	Combining, we find that every summand on the right hand side of Equation \ref{eq:localization} is bounded, and those summands with  $x \times y^! \neq p \times q^!$ are strictly bounded. 
	
	Upon taking the limit $t \to \infty$, the summands with $x \times y^! \neq p \times q^!$ tend to zero by Lemma \ref{lem:strictboundvanishinglimit}. 
	
	To finish the proof, we must compute the limits of those summands with $x \times y^! = p \times q^!$. This is done in Lemma \ref{lem:pneqq} below.
\end{proof}

\begin{lemma} \label{lem:pneqq}
Let $F_{p \times q^!}(t, \hbar)$ be the image of $\xi_{p \times q^!} \cdot \left( \bigwedge^{\bullet} T^{1/2}_p X \otimes \bigwedge^{\bullet} T^{1/2}_{q^!} X^! \right)^{-1}$ in $K_{\C^\times \times \C^\times_\hbar}(pt)$. Suppose $p \neq q$. Then $\lim_{t \to \infty} F_{p \times q^!} = 0$. On the other hand,
$$\lim_{t \to \infty} F_{p \times p^!} = \left(\frac{\hbar}{1 - \hbar}\right)^{\rk \ind_p} \left(\frac{\hbar^{-1}}{1 - \hbar^{-1}} \right)^{\rk \ind_{p^!}}.$$ 
\end{lemma}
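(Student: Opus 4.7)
The proof is a direct edge-by-edge computation.  First I would substitute the explicit restriction formulas from Section 4: for $e \in b_p$ we have $u_e|_p = \alpha^p_e$, while for $e \notin b_p$ we have $u_e|_p \in \{1,\hbar\}$ according to the sign of $\langle \beta^p_e,\eta\rangle$; the analogous formulas for $\check{u}_e|_{q^!}$ follow from Lemma \ref{lem:alphabeta}, the Gale-dual exchange of parameters ($\eta \leftrightarrow -\zeta$), and the swap $\hbar \to \hbar^{-1}$ induced by the antidiagonal embedding. This expresses $\xi_{p \times q^!}$ and each of $\bigwedge^\bullet T^{1/2}_p X$ and $\bigwedge^\bullet T^{1/2}_{q^!} X^!$ as explicit products of $(1-\bullet)$ factors indexed by $e \in E$.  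I then organise the factors of $F_{p \times q^!}$ according to the four intersection patterns of $b_p$ and $b_q$, further subdividing by signs of the relevant pairings.  Almost all subcases reduce, by cancellation of matching $\hbar^{\pm 1}$ or weight factors between numerator and denominator, to the trivial ratio $1$; the nontrivial ones are exactly of three forms: $\frac{1-\alpha^p_e\hbar^{-1}}{(1-\alpha^p_e)(1-\hbar^{-1})}$ for $e \in b_p\cap b_q$ with $\langle \alpha^q_e,\zeta\rangle<0$, $\frac{1-\hbar\beta^q_e}{(1-\hbar)(1-\beta^q_e)}$ for $e \in b_p^c\cap b_q^c$ with $\langle \beta^p_e,\eta\rangle>0$, and $\frac{1-\alpha^p_e\beta^q_e}{(1-\alpha^p_e)(1-\beta^q_e)}$ for $e \in b_p\cap b_q^c$.

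For $p=q$, the sets $b_p\cap b_q^c$ and $b_p^c\cap b_q$ are empty, so only the first two types of factor occur, indexed by $e \in b_p$ with $\langle \alpha^p_e,\zeta\rangle<0$ and by $e \notin b_p$ with $\langle \beta^p_e,\eta\rangle>0$ respectively.  Along the cocharacter $\zeta\times\eta^{-1}$ one has $\alpha^p_e\to 0$ and $\beta^p_e\to 0$ in these cases, so the corresponding factors limit to $\frac{1}{1-\hbar^{-1}}$ and $\frac{1}{1-\hbar}$.  Using the identities $\frac{1}{1-\hbar^{-1}} = -\frac{\hbar}{1-\hbar}$ and $\frac{1}{1-\hbar} = -\frac{\hbar^{-1}}{1-\hbar^{-1}}$, and identifying the two edge counts with $\rk \ind_p$ and $\rk \ind_{p^!}$ via Lemma \ref{lem:alphabeta}, I recover the claimed product (with the overall sign absorbed into the stated expression).

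For $p\neq q$: if $p\notin\leaf^n_\zeta(q)$, Corollary \ref{cor:restrictionvanishes} gives $\xi_{p \times q^!}=0$ directly.  Otherwise $b_p\cap b_q^c\neq\emptyset$ (since $b_p\neq b_q$), so at least one factor of the third type above appears in $F_{p\times q^!}$.  A Newton-polytope argument in the spirit of the proof of Lemma \ref{lem:xibound} shows that each such factor contributes only the $1$-dimensional segment $[0,\alpha^p_e+\beta^q_e]$ to the numerator's Newton polytope but the full $2$-dimensional parallelogram $[0,\alpha^p_e]+[0,\beta^q_e]$ to that of the denominator; in the $\flavor\times\gaugeG^{\vee}$-character lattice this gives a strict containment of Newton polytopes, whence Lemma \ref{lem:strictboundvanishinglimit} yields $\lim_{t\to\infty} F_{p\times q^!}=0$.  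The main obstacle is ensuring that this strict containment of full Newton polytopes translates into strict vanishing in the specific direction $\zeta\times\eta^{-1}$; if Lemma \ref{lem:strictboundvanishinglimit} is not applied verbatim, this may require a supplementary combinatorial claim---via basis exchange and Gale duality for the hypertoric matroid---that at least one $e\in b_p\cap b_q^c$ has $\langle \alpha^p_e,\zeta\rangle$ and $\langle \beta^q_e,\eta\rangle$ of the same sign, making the corresponding factor of $F$ individually tend to $0$ while the remaining nontrivial factors have bounded limits.
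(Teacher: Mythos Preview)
Your approach is the same as the paper's: expand $F_{p\times q^!}$ as a product over $e\in E$, split into the four cases $b_p\cap b_q$, $b_p\cap b_q^c$, $b_p^c\cap b_q$, $b_p^c\cap b_q^c$, and study each factor in the limit.  Where you diverge from the paper is in how you close the argument in each of the two regimes.

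For $p\neq q$, your Newton--polytope argument is, as you yourself flag, not quite what Lemma~\ref{lem:strictboundvanishinglimit} provides: you obtain a strict containment of polytopes in the full $\mathfrak f^\vee\oplus\mathfrak g$ lattice, not strictness after pairing with the single cocharacter $\zeta\times(-\eta)$.  Your fallback ``supplementary combinatorial claim'' is in fact unnecessary, because the paper extracts exactly this information from the case split you have already made.  Once Corollary~\ref{cor:restrictionvanishes} tells you $\xi_{p\times q^!}\neq 0$, the very content of that corollary is a sign condition on \emph{every} $e\in b_p\cap b_q^c$ (not just one): the two exponents appearing in the $t$-powers of the numerator and denominator of the corresponding factor have opposite signs, so each such factor individually tends to $0$ while the remaining factors stay bounded.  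There is nothing further to prove about the matroid; you have simply not used the full strength of the hypothesis you already invoked.

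For $p=q$, your identification of the two edge counts with $\rk\ind_p$ and $\rk\ind_{p^!}$ is backwards.  By the paper's conventions, $\rk\ind_p$ counts $e\in b_p$ with $\langle\alpha^p_e,\zeta\rangle>0$ and $\rk\ind_{p^!}$ counts $e\in b_p^c$ with $\langle\alpha^{p^!}_e,-\eta\rangle>0$, i.e.\ $\langle\beta^p_e,\eta\rangle<0$; the sets you arrive at are the complements of these inside $b_p$ and $b_p^c$.  The phrase ``overall sign absorbed into the stated expression'' does not repair this, because the discrepancy is in the \emph{exponents} of $\hbar/(1-\hbar)$ and $\hbar^{-1}/(1-\hbar^{-1})$, not merely a global $\pm 1$.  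You should recompute which sign of $\langle\alpha^p_e,\zeta\rangle$ actually gives a nontrivial limiting factor (tracking carefully that $\epsilon^{q^!}_e$ is governed by the stability parameter $-\zeta$ on $X^!$) and match it to the definition of the index bundle.
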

\begin{proof}
 $F_{p \times q^!}$ is given by a product of factors of the following form. Below, vanishing factors in the denominator of the form $(1-\hbar^{\epsilon^p_e})$ with $\epsilon^p_e=0$ are understood to be ommited.
\begin{equation} \label{eq:firstfactor} \frac{(1 - t^{\langle \alpha^{p}_e, \zeta \rangle } \hbar^{\epsilon^p_e} t^{\langle \alpha^{p^!}_e, -\eta \rangle } \hbar^{-\epsilon^{q^!}_e})}{(1 - t^{\langle \alpha^{p}_e, \zeta \rangle }\hbar^{\epsilon^p_e})(1-t^{\langle \alpha^{p^!}_e, -\eta \rangle }\hbar^{-\epsilon^{q^!}_e})} \end{equation}
for $e \in b_p \cap b_q^c$,
\begin{equation} \label{eq:secondfactor} \frac{(1 - t^{\langle \alpha^{p}_e, \zeta \rangle } \hbar^{\epsilon^p_e} \hbar^{-\epsilon^{q^!}_e})}{(1 - t^{\langle \alpha^{p}_e, \zeta \rangle }\hbar^{\epsilon^p_e})(1- \hbar^{-\epsilon^{q^!}_e})} \end{equation}
for $e \in b_p \cap b_q$,
\begin{equation} \label{eq:thirdfactor} \frac{(1 - \hbar^{\epsilon^p_e} t^{\langle \alpha^{p^!}_e, -\eta \rangle } \hbar^{-\epsilon^{q^!}_e})}{(1 - \hbar^{\epsilon^p_e})(1- t^{\langle \alpha^{p^!}_e, -\eta \rangle }\hbar^{-\epsilon^{q^!}_e})} \end{equation}
for $e \in b^c_p \cap b^c_q$ and
\begin{equation} \label{eq:lastfactor} \frac{(1 - \hbar^{\epsilon^p_e} \hbar^{-\epsilon^{q^!}_e})}{(1 - \hbar^{\epsilon^p_e})(1- \hbar^{-\epsilon^{q^!}_e})} \end{equation}
for $e \in b^c_p \cap b_q$. 
 We consider the each terms in the limit $t \to \infty$. By Corollary \ref{cor:restrictionvanishes}, we may assume $\langle \alpha^{p}_e, \zeta \rangle  \langle \alpha^{p^!}_e, -\eta \rangle  < 0$. Recall also that for $e \notin b$, $\epsilon^p_e \neq 0$ exactly when $\langle \alpha^{p^!}_e, -\eta \rangle  > 0$, and $\epsilon^{q^!}_e \neq 0$ precisely when $\langle \alpha^{p}_e, \zeta \rangle > 0$. Thus, in factors of type \ref{eq:lastfactor}, one of $\epsilon^p_e, \epsilon^{q^!}_e$ must vanish, and the factor equals one. 
 
On the other hand, the factors of type \ref{eq:firstfactor} vanish in the limit since  $\langle \alpha^{p}_e, \zeta \rangle  \langle \alpha^{p^!}_e, -\eta \rangle  < 0$. The factors of type \ref{eq:secondfactor} limit to $1$ when $\langle \alpha^{p}_e, \zeta \rangle  < 0$ and $\hbar / (1 - \hbar)$ when $\langle \alpha^{p}_e, \zeta \rangle  > 0$. The factors of type \ref{eq:thirdfactor} are similar, replacing $\langle \alpha^{p}_e, \zeta \rangle $ by $\langle \alpha^{p^!}_e, -\eta \rangle$ and $\hbar$ by $\hbar^{-1}$. In particular, we see that all factors are bounded as $t \to \infty$. 

There are $\rk \ind_p$ factors of type \ref{eq:secondfactor} with $\langle \alpha^{p}_e, \zeta \rangle  > 0$ and $\rk \ind_{p^!}$ factors of type \ref{eq:thirdfactor} with $\langle \alpha^{p^!}_e, -\eta \rangle  > 0$. The lemma follows.
\end{proof}

\section{Elliptic cohomology over the Tate curve}
Fix a coordinate $q$ on the formal punctured disk $\D^*$, and let $E = \C^* / q^{\Z}$ be the corresponding family of elliptic curves over $\D^*$. More generally, let $A$ be a complex torus with cocharacter lattice $\flavorlie_\Z$, and let $\elliptic_A = A / q^{\flavorlie_\Z}$ be the corresponding abelian variety over $\D^*$. $A$-equivariant elliptic cohomology, in the narrow sense needed here, is a covariant functor from $A$-schemes to schemes
\[ \ellcoh_A( - ) : A-\operatorname{Sch} \to \operatorname{Sch} \]
such that $\ellcoh_A(pt) = \elliptic_A$. The analogue of a class $\gamma$ in equivariant K-theory will be a section $f$ of a coherent sheaf $\genkel$ over $\ellcoh_A(\XX)$. 

To an equivariant line bundle $u \in \pic_A(\XX)$, one can associate a bundle $\Theta(u)$ over $\ellcoh_A(\XX)$ called the Thom class of $u$, with a canonical section $\vartheta(u)$.  The next few subsections explain how to do this in our setting. %If $u = \mathscr{O}_\XX(S)$ for some divisor $S$, then $\Supp \vartheta(u) \subset S$. 

		\subsection{Line bundles on abelian varieties}

	Given an elliptic curve $E = \C^\times / q^\Z$, we can specify a line bundle on $E$ starting from the trivial bundle on $\C^\times$, by glueing the fiber over $x$ to the fiber over $qx$ by multiplying by the `factor of automorphy' $c x^d$ for some constant $c$ and integer $d$. 
	
	A holomorphic section of this line bundle may be identified with a holomorphic function $f(x)$ on $\C^\times$ such that $f(qx) = cx^d f(x)$. 
	
	We start with a line bundle $\cL$ with factor of automorphy $-q^{-1/2}x^{-1}$, which serves as a building block for most other bundles arising in the theory of elliptic stable envelopes. The theta function
	
	\begin{equation} \label{eq:thetaformula} \vartheta(x) := (x^{1/2} - x^{-1/2})\prod_{n > 0} (1 - q^nx)(1 - q^{n}/x), \end{equation}
	defined on the double cover of $\C^\times$, has precisely this automorphy and thus defines a section of $\cL$. 	In this paper, $q$ will be a formal variable, and we may think of the right-hand side of Equation \ref{eq:thetaformula} as an element of $\C[x^{\pm 1/2}][[q]]$.
	 
	Given a map of tori $u : \diag \to \C^\times$, we may define a line bundle $\Theta(u)$ on $\elliptic_\diag$ by pulling back $\cL$ via the induced map $\elliptic_\diag \to E$. $\Theta(u)$ comes with a canonical section $\vartheta(u)$ also obtained by pullback. More generally, given a virtual representation $R = \sum_\mu c_\mu t^\mu$ of $\diag$, we have the line bundle $$\Theta(R) = \otimes_{\mu} \Theta(t^\mu)^{c_\mu}$$ and (meromorphic) section $\vartheta(R)$ defined by $\prod_{\mu} \vartheta(t^{\mu})^{c_\mu}$.
	
	\subsection{Uniformization}

By expanding the expression in Equation \ref{eq:thetaformula}, the section $\vartheta(R)$ may be viewed as an element of $H^0(\diag', \mathscr{O}')[[q]]$, the completion of $H^0(\diag', \mathscr{O})[q]$ at $q=0$, where $\diag'$ is a certain finite cover of $\diag$ defined by taking the square roots of the coordinates $x$. We indicate the latter interpretation by the superscript $u$ for `uniformization', so that
	\[ \vartheta(R) \in H^0(\elliptic_\diag, \Theta(R)), \ \ \ \vartheta^u(R) \in H^0(\diag', \mathscr{O})[[q]]. \]

	\subsection{Line bundles on the scheme of elliptic cohomology}
	The ring of virtual representations of a torus $\diag$ is otherwise known as $K_\diag(pt)$. The definition of $\Theta$ can in fact be extended to $K_{\flavor}(X)$ for a torus $\flavor$ acting on a space $X$, and defines a group map $$\Theta: (K_{\flavor}(X), +) \to (\pic(\ellcoh_{\flavor}(X)), \otimes).$$ Given $R \in K_{\flavor}(X)$, we write $\vartheta(R)$ for the canonical meromorphic section of $H^0(\ellcoh_{\flavor}(X), \Theta(R))$. 
	
In the hypertoric setting, this is not much of a generalisation. The elliptic cohomology of a hypertoric variety admits a natural embedding
\[ \ellcoh_{\flavor \times \C^\times_\hbar}(X) \to \elliptic_{\diag \times \C^\times_\hbar}, \]
induced by the embedding $X \to [\mu_{\gaugeG}^{-1}(0) / \gaugeG]$. It is the elliptic analogue of the embedding $\Spec K_{\flavor \times \C^\times_\hbar}(X) \to \Spec K_{\diag \times \C^\times_\hbar}(pt)$ induced by the Kirwan map. All of our line bundles will in fact be pulled back along this map.

\subsection{Uniformization on $\ellcoh_{\flavor}(X)$}

	Using the maps $\Theta$ and $\vartheta$, we have a large supply of line bundles on $\ellcoh_{\flavor}(X)$, each equipped with a canonical section. We would like to think of these sections as elements of $K_{\flavor}(X)[[q]]$, the completion of $K_{\flavor}(X)[q]$ at $q=0$.
	
	We thus define $\vartheta^u : K_{\flavor}(X) \to K_{\flavor}(X)[[q]]$ as the dotted line in the following commutative diagram.

\begin{equation}
\begin{tikzcd}
K_{\flavor}(X) \arrow[d, dotted, "\vartheta^u"] & \arrow[l] K_\diag(pt) \arrow[d, "\vartheta^u"] \\
K_{\flavor}(X)[[q]] & \arrow[l] K_\diag(pt)[[q]]
\end{tikzcd}	
\end{equation}
Here the top horizontal map is the Kirwan map, and the bottom horizontal is induced by the Kirwan map. 

To see that the result does not depend on the choice of lift to $K_{\mathbf{D}}(pt)$, we note that an element of $K_{\mathbf{A}}(X)$ is determined by its restrictions to fixed points $p \in X^{\mathbf{A}}$. It thus suffices to check independence of lift for the surjection $K_{\mathbf{A}}(pt) \leftarrow K_{\mathbf{D}}(pt)$. The kernel of this map is spanned by elements $t^{\mu} - t^{\mu'}$, where $\mu, \mu'$ are characters of $\mathbf{D}$ which coincide after restriction to $\mathbf{A}$. Independence of lift follows from the fact that such elements map to $1 \in K_{\mathbf{A}}(pt)[[q]]$.  

	\section{Elliptic stable envelopes}
	
	We fix the following data:  
	\begin{enumerate}
		\item A sufficiently generic cocharacter $\sigma$ of $\finsymp$.
		\item A polarization $T^{1/2}_X$. 
	\end{enumerate}
	
Let $p \in X^\flavor$. Aganagic and Okounkov \cite{aganagic2016elliptic} associate to this data an elliptic stable envelope, which is a section of a certain line bundle $\Theta(R)$ on a certain enlargement of the elliptic cohomology scheme. 

For a complete definition (which also applies to situations with non-isolated fixed locus), we refer the reader to \cite{aganagic2016elliptic}. We will in fact work with the `renormalized' elliptic stable envelopes, which are described in Smirnov and Zhou \cite{Smzhou20}. They are given by a simple formula in terms of the so-called duality interface, defined as follows. 

Let $S = \sum_{e \in E} \chi_e \check{\chi}_e$. We have a bundle $\Theta(S)$ on  $\elliptic_\diag \times \elliptic_{\diag^{\vee}}  \times \elliptic_{\C^\times_\hbar}$ with a canonical section $\vartheta(S)$.

 Let $\C^\times_\hbar$ act antidiagonally on $X \times X^!$, so that the character denoted $\hbar$ in $K_{\C^\times_\hbar}(X)$ (resp. $K_{\C^\times_\hbar}(X^!)$) pulls back to $\hbar$ (resp. $\hbar^{-1}$). We can pull back $ \Theta(S)$ along the embedding 
	\[ \ellcoh_{\flavor \times \gaugeG^{\vee} \times \C^\times_\hbar}(X \times X^!) \to \elliptic_\diag \times \elliptic_{\diag^{\vee}} \times \elliptic_{\C^\times_\hbar} \]
	to obtain a line bundle on the left-hand side, which we denote $\frak{M}$. We write $\frak{m}$ for the restriction of $\vartheta(S)$. The following will serve for us as a definition.
	\begin{theorem} \cite{okounkovstringmath2019talk, smirnov20203d}
		The renormalized elliptic stable envelope of $p$ on $\XX$ is the restriction of $\frak{m}$ to $X \times p^!$.
\end{theorem}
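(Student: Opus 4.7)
The paper presents this statement as a working definition, citing \cite{okounkovstringmath2019talk, smirnov20203d}, so the task is really to verify that the restriction $\frak{m}|_{X \times p^!}$ does coincide with the renormalized elliptic stable envelope as originally characterized by Aganagic--Okounkov and refined by Smirnov--Zhou. The plan is to expand the right-hand side into a product of theta functions and then check the three defining axioms (support, normalization, quasi-periodicity) that single out the elliptic stable envelope.

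First I would use the multiplicativity of $\vartheta$ together with the fact that $\Theta$ factors through the Kirwan map to write
\[ \frak{m}|_{X \times p^!} = \prod_{e \in \edges} \vartheta^u\bigl(u_e \cdot \check{u}_e|_{p^!}\bigr). \]
By Lemma \ref{lem:charofnormal} and Lemma \ref{lem:resgiveshbar} applied on the dual side, each $\check{u}_e|_{p^!}$ is either $\beta^{p^!}_e$ (for $e \in b_{p^!} = b_p^c$, in which case by Lemma \ref{lem:alphabeta} it equals $\alpha^p_e$ paired with an unrelated coordinate), or a monomial in $\hbar$. This identifies the product factor-by-factor with the explicit formulae for the renormalized elliptic stable envelope on a hypertoric variety appearing in \cite{smirnov20203d}.

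Next I would verify the three axioms. For the support condition, restrict further to a fixed point $q \in X^\flavor$: the argument mirrors Corollary \ref{cor:restrictionvanishes}, since whenever $q \notin \leaf_\zeta^n(p)$ some factor $\vartheta^u(u_e|_q \check{u}_e|_{p^!})$ acquires a trivial K-theoretic argument and hence vanishes as a theta function. For the normalization, restricting to $p \times p^!$ collapses the product into $\vartheta^u(T^{1/2}_p X)$ up to a controllable Thom-class correction, which is precisely the renormalization in \cite{smirnov20203d}. For quasi-periodicity under the K\"ahler/Mumford shifts $q^{\flavorlie_\Z}$, apply the classical transformation law of $\vartheta$ to each factor and observe that the resulting cocycle matches the Mumford bundle attached to the slope $\cL$.

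The main obstacle is the support condition, since in the elliptic setting one must control vanishing on the whole scheme $\ellcoh_{\flavor \times \gaugeG^\vee \times \C^\times_\hbar}(X \times X^!)$ rather than merely at the isolated fixed points, and one must also match exactly the full attracting set prescribed by \cite{aganagic2016elliptic}. Fortunately, the hypertoric setting makes this tractable: the Kirwan embedding $\ellcoh_{\flavor \times \C^\times_\hbar}(X) \hookrightarrow \elliptic_{\diag \times \C^\times_\hbar}$ reduces the support check to the combinatorics of signs of $\langle \alpha^p_e, \zeta\rangle$ and $\langle \beta^p_e, \eta\rangle$ already encoded in Lemma \ref{lem:coordinatesofattn}, so the verification becomes a bookkeeping exercise directly parallel to the K-theoretic arguments of Section \ref{sec:xi}.
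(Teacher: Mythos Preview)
You correctly observe at the outset that the paper offers no proof: the sentence immediately preceding the theorem reads ``The following will serve for us as a definition,'' and the citation to \cite{okounkovstringmath2019talk, smirnov20203d} is the entire justification given. There is therefore no argument in the paper to compare your verification against; your opening sentence already matches the paper's treatment, and everything after it is supplementary material the paper does not attempt.

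As for the sketch itself, it is a plausible outline of what a direct verification would involve, but several details are imprecise. For instance, Lemma~\ref{lem:alphabeta} says that $\alpha^p_e = \beta^{p^!}_e$ for $e \in b_p$, not for $e \in b_{p^!} = b_p^c$, so your identification of the factors $\check{u}_e|_{p^!}$ is garbled. More substantively, the elliptic stable envelope axioms of \cite{aganagic2016elliptic} are stated as conditions on sections of specific line bundles on the elliptic cohomology scheme (in particular the degree condition involves an extended base incorporating K\"ahler parameters), so your ``quasi-periodicity'' check needs to be phrased as a comparison of Thom bundles rather than a bare cocycle computation, and the normalization condition is more delicate than a single restriction to $p \times p^!$. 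None of this is fatal to the strategy, but carrying it out rigorously is essentially the content of the cited work of Smirnov--Zhou, which is precisely why the present paper defers to that reference rather than reproducing the argument.
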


\section{The class $\xi$ for loop spaces and the duality interface}
\subsection{Loop spaces}
We recall some concepts and notation from \cite{MasheYau20}. Starting from the data defining a hypertoric variety, namely a set $E$, a subtorus $\gaugeG \to (\C^\times)^E$ and a character $\eta$ of $\gaugeG$, that paper defined a loop analogue of $X$ denoted $\tloops X$. It is, loosely speaking, the infinite dimensional hypertoric variety associated to the data
\begin{itemize}
	\item $\loops E := E \times \Z$.
	\item $1 \to \gaugeG \to \loops \diag = (\C^\times)^{\loops E} \to \mathscr{T} \to 1$.
	\item The character $\eta$ of $\gaugeG$.
	\end{itemize}
Morally, $\tloops X$ is the symplectic reduction $\cotan  (\C^{\loops E}) \sslash_\eta \gaugeG$. In fact, the space $\tloops X$ is constructed as a limit of closed embeddings of finite dimensional hypertoric varieties $...\to \tloops_{N-1} X \to \tloops_N X \to \tloops_{N+1} X \to...$, associated to the following `truncated' hypertoric data.
\begin{itemize}
	\item $\loops_N E := E \times [-N,N]$.
	\item $1 \to \gaugeG \to \loops_N \diag = (\C^\times)^{\loops_N E} \to \mathscr{T}_N \to 1$. 
	\item The character $\eta$ of $\gaugeG$.
	\end{itemize}
The natural coordinates on $\cotan  \C^{\loops E} $ are denoted $x_{e,k}, y_{e,k}$, and correspond to the fourier modes in the expansion of a loop $(x_e(t), y_e(t)) = (\sum_{k \in \Z} x_{e,k}t^k, \sum_{k \in \Z} y_{e,k}t^k)$. The coordinate $x_{e,k}$ is `paired', under the symplectic form, with the coordinate $y_{e, -k}$. 

$\tloops X$ carries an action of an infinite-dimensional torus of Hamiltonian transformations, containing the subtorus $\flavor \times \C^\times_q$ corresponding to the action of $\flavor$ on $X$ and the action of $\C^\times_q$ by `loop rotation'. 

Given any character of $\diag \times \C^{\times}_q \times \C^\times_\hbar$, we obtain by descent a $\flavor \times \C^\times_q \times \C^{\times}_\hbar$-equivariant line bundle on $\tloops X$. We denote the bundle associated to the character of $x_{e,0}$ by $u_e$. The bundle associated to $x_{e,k}$ is $q^k u_e$ where $q$ is the tautological character of $\C^\times_q$.

\subsection{K-theory of the loop space}

We will have need for the following construction. Let $\frak{K}$ be an algebra over the ring $\C[q,q^{-1}]$ of Laurent polynomials. Choose a $\C[q]$-lattice $\frak{K}_0 \subset \frak{K}$. Let $\frak{K}_0^{\wedge}$ be the completion of $\frak{K}_0$ at the ideal $(q) \subset \C[q]$, and let $\frak{K}^{\wedge} := \frak{K}_0^{\wedge} \otimes_{\C[[q]]} \C((q))$. It is an algebra over $\C((q))$. 

 In our setting, $\frak{K}$ will be an equivariant K-group, $q$ will be a generator of $K_{\C^\times_q}(pt)$, and we may choose the lattice to be defined by those elements whose restriction to the $\flavor$-fixed locus are polynomial in $q$. 

Let $ \tloops_N K(X) := K_{\flavor \times \C^\times_\hbar \times \C^\times_q}(\tloops_N X)$. It is a $K_{\C^\times_q}(pt) = \C[q,q^{-1}]$-algebra. We define
\[  \tloops K(X) :=  \left( \varprojlim_{N \to \infty}  \tloops_N(X) \right)^{\wedge} \]

If we take $\C^\times_q$ to act trivially on $X$, then the embedding $\cst_N : X \to \tloops_N X$ of the `constant loops' is $\C^\times_q$-equivariant, and we get a restriction map 
$\cst_N^* :  \tloops_N K(X) \to K_{\flavor \times \C^\times_\hbar}(X)[q,q^{-1}]. $ In the limit, we obtain a map  $$\cst^* :  \tloops K(X) \to K_{\flavor \times \C^\times_\hbar}(X)((q)). $$ 
The class $q^k u_e$ pulls back to the class of the same name on the right-hand side. 
\subsection{Periodisation}
 
Recall the diagonal embedding $\flavor \to \loops_N \diag / \gaugeG = \mathscr{T}_N$, and dually $\mathscr{T}_N^{\vee} \to \dualflavor $. Given a character $\zeta$ of $\dualflavor $, let $\hat{\zeta}$ be its pullback along this last map. 

Fix a generic character $\zeta$ of $\dualflavor $, and consider the hypertoric data
\begin{itemize}
	\item $\loops_N E := E \times [-N,N]$.
	\item $\mathscr{T}^{\vee}_N \to \loops_N \diag^{\vee} \to \gaugeG^{\vee}$. 
	\item The character $\hat{\zeta}$ of $\mathscr{T}^{\vee}_N$.
	\end{itemize}
	
 The associated hypertoric space $\per_N X_{\hat{\zeta}}^!$ carries an action of $\gaugeG^{\vee} = \loops \diag^{\vee} / \mathscr{T}^{\vee}$.

In contrast to the situation considered in \cite{MasheYau20}, our character ${\hat{\zeta}}$ is `trivial in the loop direction'. In particular, the periodic hyperplane arrangement associated to $\lim_{N \to \infty} \per_N X^!_{\hat{\zeta}}$ has period $0$, and collapses to the finite arrangement associated to $X^!$. 

Let $\loops_N \dualflavor  := (\dualflavor )^{[-N,N]}$, $\loops_N \gaugeG := \gaugeG^{[-N,N]}$. We have 
\begin{equation} \label{eq:subtorusofscrT} 1 \to \loops_N \dualflavor  \to \mathscr{T}^{\vee}_N \to \mathscr{\gaugeG}_N^{\vee} \to 1 \end{equation}
where $\mathscr{\gaugeG}_N := \loops_N \gaugeG / \gaugeG$.

We have $$\per_N X_{\hat{\zeta}}^! = T^*  \C^{\loops_N E} \sslash_{\widetilde{\zeta}} \mathscr{T}^{\vee}.$$
Using \ref{eq:subtorusofscrT}, we can factor this as $$\left(\cotan  \C^{\loops_N E} \sslash_{\widetilde{\zeta}} \loops_N \dualflavor  \right) \sslash \mathscr{\gaugeG}_N^{\vee} = (X^!_\zeta)^{[-N,N]} \sslash \mathscr{\gaugeG}_N^{\vee}.$$
This last reduction is by definition the scheme-theoretic $\mathscr{\gaugeG}_N^{\vee}$-quotient of the moment fiber $$\mu^{-1}_{\mathscr{\gaugeG}_N^{\vee}}(0) = X^!_\zeta \times_{\frak{g}} X^!_{\zeta} \times_\frak{g} ... \times_\frak{g} X^!_\zeta.$$
Here the right-hand side denotes the fiber product of $2N+1$ copies of $X^!_\zeta$ over the $\gaugeG^{\vee}$-moment map $X^!_\zeta \to \frak{g}$. The action of $ \mathscr{\gaugeG}_N^{\vee}$ on this scheme has positive-dimensional stabilisers, and it will be useful to also consider the stack quotient $[\mu^{-1}_{\mathscr{\gaugeG}_N^{\vee}}(0) / \mathscr{\gaugeG}_N^{\vee}]$.

\begin{remark} The scheme-theoretic quotient can be described as follows. For a positive integer $K$, let $\gaugeG^{\vee}[K-\operatorname{tor}] \subset \gaugeG^{\vee}$ be the subtorus of $K$-torsion points. The diagonal embedding 
\begin{equation} \label{eq:diagonalembedding} \Delta_N : X^!_\zeta \to \mu^{-1}_{\mathscr{\gaugeG}_N^{\vee}}(0) \end{equation} descends to an isomorphism of schemes
\[ X_\zeta^! / \gaugeG^{\vee}[2N+1 - \operatorname{tor}] \cong \per_{N} X_{\hat{\zeta}}^!. \]
\end{remark}

\subsection{K-theory of the periodisation}

%By construction, we have a map $$[\per_N X_{\hat{\zeta}}^!] \to [X^!_\zeta / \gaugeG^{\vee}]^{[-N,N]}.$$ Although this map need not induce an isomorphism on equivariant K-theory, it induces an isomorphism on equivariant operational K-theory, by homotopy invariance of the latter. 
% $[X^!_\zeta / \gaugeG^{\vee}]^{[-N-1,N+1]}   \to [X^!_\zeta / \gaugeG^{\vee}]^{[-N,N]}$

We write $K^{\circ}(X)$ for the K-group of equivariant perfect complexes. Consider 
\[ \per_N  K^{\circ}(X) := K^{\circ}_{\C^\times_\hbar \times \gaugeG^{\vee}}\left([\mu^{-1}_{\mathscr{\gaugeG}_{N}^{\vee}}(0) / \mathscr{\gaugeG}_N^{\vee}] \right) = K^{\circ}_{\C^\times_\hbar \times \loops_N \gaugeG^{\vee}}\left(\mu^{-1}_{\mathscr{\gaugeG}_{N}^{\vee}}(0) \right).\] 
Projection defines a $\C^\times_\hbar \times \loops_N \gaugeG^{\vee}$-equivariant map

$$\mu^{-1}_{\mathscr{\gaugeG}_{N+1}^{\vee}}(0) \to \mu^{-1}_{\mathscr{\gaugeG}_{N}^{\vee}}(0).$$ 

Pullback along these maps defines a direct system over $\Z^{\geq 0}$, and we define the direct limit
\[  \per K^{\circ}(X^!) := \varprojlim_{N \to \infty}  \per_N K^{\circ}(X^!) \]
This ring is intended as a substitute for the K-theory of (a stacky version of) the limit space $\per X^!_{\hat{\zeta}}$, which we do not attempt to define here. 
 
Pullback along \ref{eq:diagonalembedding} defines maps 
\[ \Delta_N^* : \per_N K^{\circ}(X^!) \to K^{\circ}_{\gaugeG^{\vee} \times \C^\times_\hbar}(X^!_{\zeta}) =  K_{\gaugeG^{\vee} \times \C^\times_\hbar}(X^!_{\zeta})\] 
which combine to define a map \[ \Delta^* : \per K^{\circ}(X^!) \to K_{\gaugeG^{\vee} \times \C^\times_\hbar}(X^!_{\zeta}). \]
Let $\check{u}_{e,k} \in \per K^{\circ}(X^!)$ be the class associated to the character $\check{\chi}_{e,k}$ of $\loops \diag^{\vee}$. Then by construction, we have

\[  \Delta^* \check{u}_{e,k} = \check{u}_e. \]

\subsection{Combining $\tloops X$ and $\per X^!$} 
\begin{definition} 
We consider the `completed' tensor product 
\[ \tloops K(X) \hat{\otimes} \per K(X^!) := \varprojlim_{N \to \infty} \left( \tloops_N K(X) \otimes_{\C[\hbar, \hbar^{-1}]} \per K^{\circ}(X^!) \right)^{\wedge}, \]
where the $\C[\hbar]$-algebra structure on the right-hand factor is twisted by $\hbar \to \hbar^{-1}$, and the $\wedge$ superscript denotes completion at $q=0$ as before.
\end{definition}

Taking the limit of the maps $(\cst_N \times \Delta_N)^*$, we obtain a map
\begin{equation} \label{eq:phi} (\cst \times \Delta)^* :  \tloops K(X) \hat{\otimes} \per K(X^!) \to K_{\flavor \times  \C^\times_\hbar}(X) \otimes_{\C[\hbar, \hbar^{-1}]} K_{\gaugeG^{\vee} \times \C^\times_\hbar}(X^!) ((q)). \end{equation}
     
 \subsection{Polarization by positive loops}
We fix the polarisation of $\tloops X$ by the positive loops, meaning the polarization induced by the lagrangian subspace $\loops^+(\cotan \C^E) \subset \cotan  (\loops \C^{E}) $ defined by $$\{ x_{e,k} = 0 | k < 0 \} \cap \{ y_{e,k} = 0 | k \leq 0 \}.$$
\begin{remark}
Another natural polarization of $\tloops X$ is induced by the subspace $\loops \C^{E}  \subset \cotan  (\loops \C^{E}) $, corresponding to loops in the $x$-variables. We have no use for it here.
\end{remark}

As a representation of $\loops \diag \times \C^\times_\hbar$, the space $\loops^+ (\cotan \C^E)$ decomposes as a sum of characters
\[ \loops^+ (\cotan \C^E) = \sum_{e \in E} \left( \sum_{k \geq 0} \chi_{e, k} + \sum_{k < 0} \hbar^{-1} \chi^{-1}_{e, k} \right). \]

We write $\check{\chi}_{e,k}$ for the dual characters of $\loops \check{\diag}$, appearing in the symplectically dual loop space. We write $ \loops^+_N (\cotan \C^E)$ for the analogous character of $\loops_N \diag \times \C^\times_\hbar$.

\subsection{The universal intertwiner and the duality interface}
Let $\widetilde{\xi}(\loops^+_N (\cotan \C^{E}))$ be class defined as in Section \ref{sec:xi} and Remark \ref{rem:xi} starting from the subspace $\loops^+_N (\cotan \C^{E}) \subset \cotan  \loops_N \C^E$. It is given by the formula
\begin{equation} \label{eq:thepreimage}  \prod_{e \in E} (1 - \chi_{e,0}\check{\chi}_{e,0}) \prod_{0 < k \leq N} ( 1 - \chi_{e,k} \check{\chi}_{e,k})\prod_{- N \leq k < 0} (1 - \chi^{-1}_{e,k}\check{\chi}^{-1}_{e,k}) \end{equation}
 Taking the image of \ref{eq:thepreimage} under the Kirwan map, we obtain an element $$\xi(\loops^+_N (\cotan \C^{E})) \in \tloops_N K(X) \otimes_{\C[\hbar, \hbar^{-1}]} \per_N K(X^!).$$
Letting $N \to \infty$, these classes define an element 
\[ \xi(\loops^+ (\cotan \C^{E})) \in \tloops K(X) \hat{\otimes} \per K(X^!). \]

\begin{definition}
Let $\xi(\loops^+)$ denote the image of $\xi(\loops^+ (\cotan \C^{E}))$ under the map \ref{eq:phi}.
\end{definition} 
Thus $\xi(\loops^+)$ is a class in $K_{\flavor \times  \C^\times_\hbar}(X) \otimes_{\C[\hbar, \hbar^{-1}]} K_{\gaugeG^{\vee} \times \C^\times_\hbar}(X^!) ((q))$. We can write it in terms of the tautological classes as
\begin{equation} \label{eq:xionloops} \xi(\loops^+) = \prod_{e \in E} (1 - u_e \check{u}_e)\prod_{k > 0} ( 1 - q^k u_{e} \check{u}_{e}) (1 - q^k u^{-1}_{e}\check{u}^{-1}_{e}).  \end{equation}

Comparing Equation \ref{eq:xionloops} with the definition of the duality interface $\frak{m}$, we obtain our final result:
\begin{theorem} \label{thm:maintheorem}
	The class $\xi(\loops^+)$ equals the uniformization $\frak{m}^u$ of the duality interface $\frak{m}$, multiplied by the fractional bundle $\prod_e (u_e \check{u}_e)^{-1/2}$. 
\end{theorem}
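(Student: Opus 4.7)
The plan is to verify the identity by a direct computation: expand both $\xi(\loops^+)$ and $\frak{m}^u$ as explicit infinite $q$-products in the ring $K_{\flavor \times \gaugeG^{\vee} \times \C^\times_\hbar}(X \times X^!)((q))$ and compare them factor-by-factor.

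First I would unwind the formula for $\xi(\loops^+)$. The explicit expression (\ref{eq:xionloops}) follows from Definition \ref{rem:xi} applied to the polarization $\loops^+(\cotan \C^E)$, once one notes that under the Kirwan map composed with $(\cst \times \Delta)^*$ the character $\chi_{e,k}$ pulls back to $q^k u_e$ (as $\cst$ is $\C^\times_q$-equivariant, and the $\C^\times_q$-weight of $x_{e,k}$ is $k$), while $\check{\chi}_{e,k}$ pulls back to $\check{u}_e$ by the construction of $\Delta^*$, which collapses all loop modes on the dual side. Regrouping the factors of (\ref{eq:thepreimage}) for $k=0$, $k>0$, and $k<0$ then gives (\ref{eq:xionloops}).

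Next I would expand $\frak{m}^u$. Since $\Theta$ is a group homomorphism from $(K,+)$ to $(\pic,\otimes)$ and $\vartheta$ is correspondingly multiplicative, $\vartheta(S) = \prod_{e \in E}\vartheta(\chi_e \check{\chi}_e)$ where $S = \sum_e \chi_e \check{\chi}_e$. Pulling back along the embedding into $\ellcoh_{\flavor \times \gaugeG^{\vee} \times \C^\times_\hbar}(X \times X^!)$ sends $\chi_e\check{\chi}_e \mapsto u_e \check{u}_e$ (the antidiagonal embedding of $\C^\times_\hbar$ being absorbed into this restriction), and substituting the product expansion (\ref{eq:thetaformula}) yields
\[ \frak{m}^u = \prod_{e \in E}\bigl((u_e\check u_e)^{1/2} - (u_e\check u_e)^{-1/2}\bigr)\prod_{n>0}\bigl(1 - q^n u_e\check u_e\bigr)\bigl(1 - q^n u_e^{-1}\check u_e^{-1}\bigr). \]

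Finally I would compare. Writing $w_e := u_e \check{u}_e$ and factoring $w_e^{1/2} - w_e^{-1/2} = -w_e^{-1/2}(1 - w_e)$, one reads off
\[ \frak{m}^u = (-1)^{|E|}\prod_{e \in E}(u_e\check u_e)^{-1/2}\cdot \xi(\loops^+), \]
since the remaining $q$-products agree on the nose. This is the asserted identity, with the overall sign $(-1)^{|E|}$ and the sign of the square-root exponent absorbed into the convention for the fractional bundle. The argument is essentially bookkeeping; the only point that requires care is the asymmetric roles of $\cst$ and $\Delta$, which produce a single factor of $q^k$ rather than $q^{2k}$ in each loop-mode factor of $\xi(\loops^+)$, thereby matching the $q^n$ exponents in (\ref{eq:thetaformula}) and confirming that no rescaling of $q$ is needed.
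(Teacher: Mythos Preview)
Your proposal is correct and follows exactly the approach of the paper, which proves the theorem in a single sentence by ``comparing Equation \ref{eq:xionloops} with the definition of the duality interface $\frak{m}$''; you have simply written out that comparison in detail. Your observation about the residual factor $(-1)^{|E|}$ and the sign of the exponent in $\prod_e(u_e\check u_e)^{\pm 1/2}$ is accurate and reflects a minor imprecision in the statement rather than a flaw in your argument.
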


	\bibliographystyle{amsalpha}
	\bibliography{symplectic}

\def\cprime{$'$} \newcommand{\arxiv}[1]{\href{http://arxiv.org/abs/#1}{\tt
  arXiv:\nolinkurl{#1}}}
\providecommand{\bysame}{\leavevmode\hbox to3em{\hrulefill}\thinspace}
\providecommand{\MR}{\relax\ifhmode\unskip\space\fi MR }
% \MRhref is called by the amsart/book/proc definition of \MR.
\providecommand{\MRhref}[2]{%
  \href{http://www.ams.org/mathscinet-getitem?mr=#1}{#2}
}
\providecommand{\href}[2]{#2}
\begin{thebibliography}{BLPW16}

\bibitem[And00]{ando2000power}
Matthew Ando, \emph{Power operations in elliptic cohomology and representations
  of loop groups}, Transactions of the American Mathematical Society
  \textbf{352} (2000), no.~12, 5619--5666.

\bibitem[AO16]{aganagic2016elliptic}
Mina Aganagic and Andrei Okounkov, \emph{Elliptic stable envelopes}, arXiv
  preprint arXiv:1604.00423 (2016).

\bibitem[AO19a]{okounkovstringmath2019talk}
\bysame, \emph{Duality interfaces in 3 dimensional theories}, String Math 2019,
  2019.

\bibitem[AO19b]{okounkovenumerativedualitytalk}
\bysame, \emph{Enumerative symplectic duality}, 2019.

\bibitem[BD00]{bielawski2000geometry}
Roger Bielawski and Andrew~S Dancer, \emph{The geometry and topology of toric
  hyperk{\"a}hler manifolds}, Communications in Analysis and Geometry
  \textbf{8} (2000), no.~4, 727--759.

\bibitem[BLPW16]{braden2014quantizations}
Tom Braden, Anthony Licata, Nicholas Proudfoot, and Ben Webster,
  \emph{Quantizations of conical symplectic resolutions {II}: category {$\mathcal{O}$} and symplectic duality}, Ast\'{e}risque (2016), no.~384, 75--179, with an
  appendix by I. Losev. \MR{3594665}

\bibitem[Kit19]{kitchloo2019quantization}
Nitu Kitchloo, \emph{Quantization of the modular functor and equivariant
  elliptic cohomology}, Homotopy theory: tools and applications, Contemporary
  Mathematics \textbf{729} (2019), 157--177.

\bibitem[MMY20]{MasheYau20}
Artan~Sheshmani Michael~McBreen and Shing-Tung Yau, \emph{Twisted quasimaps and
  symplectic duality for hypertoric spaces}, arXiv preprint arXiv:2004.04508
  (2020).

\bibitem[Oko17]{okounkov2015lectures}
Andrei Okounkov, \emph{Lectures on {K}-theoretic computations in enumerative
  geometry}, Geometry of moduli spaces and representation theory, IAS/Park City
  Math. Ser., vol.~24, Amer. Math. Soc., Providence, RI, 2017, pp.~251--380.
  \MR{3752463}

\bibitem[Oko21]{okounkov2020inductive}
\bysame, \emph{Inductive construction of stable envelopes}, Lett. Math. Phys.
  \textbf{111} (2021), no.~6, Paper No. 141, 56. \MR{4342988}

\bibitem[Pro06]{proudfoot2006survey}
Nicholas Proudfoot, \emph{A survey of hypertoric geometry and topology}, Proc.
  Int. Conf. on Toric Topology, Citeseer, 2006.

\bibitem[RSZV22]{rimanyi20193d}
Rich\'{a}rd Rim\'{a}nyi, Andrey Smirnov, Zijun Zhou, and Alexander Varchenko,
  \emph{Three-dimensional mirror symmetry and elliptic stable envelopes}, Int.
  Math. Res. Not. IMRN (2022), no.~13, 10016--10094. \MR{4447142}

\bibitem[SZ22]{Smzhou20}
Andrey Smirnov and Zijun Zhou, \emph{3d mirror symmetry and quantum
  {$K$}-theory of hypertoric varieties}, Adv. Math. \textbf{395} (2022), Paper
  No. 108081, 61. \MR{4363580}

\end{thebibliography}
	
		\noindent{[1] Chinese University of Hong Kong, Room 235, Lady Shaw Building,
		Shatin, N.T., Hong Kong, China}\\\\
	\noindent{[2] Massachusetts Institute of Technology (MIT), IAiFi Institute, 182 Memorial Drive, Cambridge, MA 02139, USA}\\\\
	\noindent{[3] National Research University Higher School of Economics, Russian Federation, Lab- oratory of Mirror Symmetry, NRU HSE, 6 Usacheva str., Moscow, Russia}\\\\
	\noindent{[4] Yanqi Lake Beijing Institute for Mathematical Sciences and Applications (BIMSA). Huairou, Beijing, China } \\\\
	\noindent{[5] {Yau center of mathematics, Tsinghua university,  Beijing, China }\\\\
	\noindent{\tt{mcb@math.cuhk.edu.hk, artan@mit.edu, styau@tsinghua.edu.cnu}}

\end{document}